\newcommand{\id}{\mathrm{id}}
\newcommand{\CA}{\mathrm{CA}}
\newcommand{\ICA}{\mathrm{ICA}}
\newcommand{\Rank}{\mathrm{Rank}}
\newcommand{\dd}{\mathrm{d}}
\newcommand{\Aut}{\mathrm{Aut}}
\theoremstyle{plain}
\newtheorem{corollary}{Corollary}
\newtheorem{lemma}{Lemma}
\newtheorem{proposition}{Proposition}
\newtheorem{theorem}{Theorem}
\theoremstyle{definition}
\newtheorem{example}{Example}
\newtheorem{remark}{Remark}
\newtheorem{question}{Question}
\begin{document}

\title{Bounding the minimal number of generators of groups and monoids of cellular automata}
\author{Alonso Castillo-Ramirez\footnote{Email: alonso.castillor@academicos.udg.mx (corresponding author)} \ and Miguel Sanchez-Alvarez\footnote{Email: miguel\_201288@hotmail.com} \\
\small{Department of Mathematics, University Centre of Exact Sciences and Engineering,\\ University of Guadalajara, Guadalajara, Mexico.} }

\maketitle

\begin{abstract}
For a group $G$ and a finite set $A$, denote by $\CA(G;A)$ the monoid of all cellular automata over $A^G$ and by $\ICA(G;A)$ its group of units. We study the minimal cardinality of a generating set, known as the \emph{rank}, of $\ICA(G;A)$. In the first part, when $G$ is a finite group, we give upper bounds for the rank in terms of the number of conjugacy classes of subgroups of $G$. The case when $G$ is a finite cyclic group has been studied before, so here we focus on the cases when $G$ is a finite dihedral group or a finite Dedekind group. In the second part, we find a basic lower bound for the rank of $\ICA(G;A)$ when $G$ is a finite group, and we apply this to show that, for any infinite abelian group $H$, the monoid $\CA(H;A)$ is not finitely generated. The same is true for various kinds of infinite groups, so we ask if there exists an infinite group $H$ such that $\CA(H;A)$ is finitely generated.         \\

\textbf{Keywords:} Monoid of cellular automata, invertible cellular automata, minimal number of generators. \\

\textbf{MSC 2010:} 68Q80, 05E18, 20M20.
\end{abstract}

\section{Introduction}\label{intro}
The theory of cellular automata (CA) has important connections with many areas of mathematics, such as group theory, topology, symbolic dynamics, coding theory, and cryptography. Recently, in \cite{CRG16a,CRG16b,CRG17}, links with semigroup theory have been explored, and, in particular, questions have been considered on the structure of the monoid of all CA and the group of all invertible CA over a given configuration space. The goal of this paper is to bound the minimal number of generators, known in semigroup theory as the \emph{rank}, of groups of invertible CA.

Let $G$ be a group and $A$ a finite set. Denote by $A^G$ the \emph{configuration space}, i.e. the set of all functions of the form $x:G \to A$. The \emph{shift action} of $G$ on $A^G$ is defined by $g \cdot x(h) = x(g^{-1}h)$, for all $x \in A^G$, $g,h \in G$. We endow $A^G$ with the \emph{prodiscrete topology}, which is the product topology of the discrete topology on $A$. A \emph{cellular automaton} over $A^G$ is a transformation $\tau : A^G \to A^G$ such that there is a finite subset $S \subseteq G$ and a function $\mu : A^S \to A$ satisfying
\[ \tau(x)(g) = \mu (( g^{-1} \cdot x) \vert_{S}), \ \ \forall x \in A^G, g \in G,  \]
where $\vert_{S}$ denotes the restriction to $S$ of a configuration in $A^G$.

Curtis-Hedlund Theorem (\cite[Theorem 1.8.1]{CSC10}) establishes that a function $\tau : A^G \to A^G$ is a cellular automaton if and only if it is continuous in the prodiscrete topology of $A^G$ and commutes with the shift action (i.e. $\tau(g \cdot x) = g \cdot \tau(x)$, for all $x \in A^G$, $g \in G$). By \cite[Corollary 1.4.11]{CSC10}, the composition of two cellular automata over $A^G$ is a cellular automaton over $A^G$. This implies that, equipped with composition of functions, the set $\CA(G;A)$ of all cellular automata over $A^G$ is a monoid. The \emph{group of units} (i.e. group of invertible elements) of $\CA(G;A)$ is denoted by $\ICA(G;A)$. When $\vert A \vert \geq 2$ and $G = \mathbb{Z}$, many interesting properties are known for $\ICA(\mathbb{Z};A)$: for example, every finite group, as well as the free group on a countable number of generators, may be embedded in $\ICA(\mathbb{Z};A)$ (see \cite{BLR88}). However, despite of several efforts, most of the algebraic properties of $\CA(G;A)$ and $\ICA(G;A)$ still remain unknown. 

Given a subset $T$ of a monoid $M$, the \emph{submonoid generated} by $T$, denoted by $\langle T \rangle$, is the smallest submonoid of $M$ that contains $T$; this is equivalent as defining $\langle T \rangle := \{ t_1 t_2 \dots t_k \in M : t_i \in T, \ \forall i, \ k \geq 0 \}$. We say that $T$ is a \emph{generating set of $M$} if $M= \langle T \rangle$. The monoid $M$ is said to be \emph{finitely generated} if it has a finite generating set. The \emph{rank} of $M$ is the minimal cardinality of a generating set:
\[ \Rank(M) := \min\{\vert T \vert : M= \langle T \rangle \}. \]	

The question of finding the rank of a monoid is important in semigroup theory; it has been answered for several kinds of transformation monoids and Rees matrix semigroups (e.g., see \cite{AS09,G14}). For the case of monoids of cellular automata over finite groups, the question has been addressed in \cite{CRG16b,CRG17}; in particular, the rank of $\ICA(G;A)$ when $G$ is a finite cyclic group has been examined in detail in \cite{CRG16a}. 

For any subset $U$ of a monoid $M$, the \emph{relative rank} of $U$ in $M$ is
\[ \Rank(M:U) = \min \{ \vert W \vert : M = \langle U \cup W \rangle  \}. \]
When $U$ is the group of units of $M$, we have the basic identity
\begin{equation}\label{rank-formula}
 \Rank(M) = \Rank(M:U) + \Rank(U),
\end{equation}
which follows as the group $U$ can only be generated by subsets of itself. The relative rank of $\ICA(G;A)$ in $\CA(G;A)$ has been established in \cite[Theorem 7]{CRG17} for finite \emph{Dedekind groups} (i.e. groups in which all subgroups are normal).   

In this paper, we study the rank of $\ICA(G;A)$ when $G$ is a finite group. In Section \ref{basic}, we introduce notation and review some basic facts, including the structure theorem for $\ICA(G;A)$ obtained in \cite{CRG17}. In Section \ref{upper}, we give upper bounds for the rank of $\ICA(G;A)$, examining in detail the cases when $G$ is a finite dihedral group or a finite Dedekind group, but also obtaining some results for a general finite group. In Section \ref{lower}, we show that, when $G$ is finite, the rank of $\ICA(G;A)$ is at least the number of conjugacy classes of subgroups of $G$. As an application, we use this to provide a simple proof that the monoid $\CA(G;A)$ is not finitely generated whenever $G$ is an infinite abelian group. This result implies that $\CA(G;A)$ is not finitely generated for various classes of infinite groups, such as free groups and the infinite dihedral group. Thus, we ask if there exists an infinite group $G$ such that the monoid $\CA(G;A)$ is finitely generated.


\section{Basic Results} \label{basic}

We assume the reader has certain familiarity with basic concepts of group theory. 

Let $G$ be a group and $A$ a finite set. The \emph{stabiliser} and \emph{$G$-orbit} of a configuration $x \in A^G$ are defined, respectively, by
\[ G_x := \{ g \in G : g \cdot x = x \} \text{ and } Gx := \{ g \cdot x : g \in G \}. \]
Stabilisers are subgroups of $G$, while the set of $G$-orbits forms a partition of $A^G$. 

Two subgroups $H_1$ and $H_2$ of $G$ are \emph{conjugate} in $G$ if there exists $g \in G$ such that $g^{-1} H_1 g = H_2$. This defines an equivalence relation on the subgroups of $G$. Denote by $[H]$ the conjugacy class of $H \leq G$. A subgroup $H \leq G$ is \emph{normal} if $[H] = \{ H \}$ (i.e. $g^{-1}H g = H$ for all $g \in G$). Let $N_G(H) := \{ g \in G : H = g^{-1} H g \} \leq G$ be the \emph{normaliser of $H$ in $G$}. Note that $H$ is always a normal subgroup of $N_G(H)$. Denote by $r(G)$ the total number of conjugacy classes of subgroups of $G$, and by $r_i(G)$ the number of conjugacy classes $[H]$ such that $H$ has index $i$ in $G$: 
\begin{align*}
r(G) & := \vert \{ [H] : H \leq G \} \vert, \\
r_i(G) & := \vert \{ [ H] : H \leq G, \ [G:H] = i \} \vert.
\end{align*}

For any $H \leq G$, denote
\[ \alpha_{[H]}(G ; A):= \vert \{ Gx \subseteq A^G :  [G_x] = [H]  \} \vert. \]
This number may be calculated using the Mobius function of the subgroup lattice of $G$, as shown in \cite[Sec. 4]{CRG17}.

For any integer $\alpha \geq 1$, let $S_{\alpha}$ be the symmetric group of degree $\alpha$. The \emph{wreath product} of a group $C$ by $S_\alpha$ is the set
\[ C \wr S_{\alpha} := \{ (v; \phi) : v \in C ^\alpha, \phi \in S_\alpha \} \]
equipped with the operation $(v;\phi) \cdot (w; \psi) = ( v  \cdot w^{\phi}; \phi \psi)$, for any $v,w \in C^\alpha, \phi, \psi \in S_\alpha$, where $\phi$ acts on $w$ by permuting its coordinates:
\[  w^\phi = (w_1, w_2, \dots, w_\alpha)^\phi := (w_{\phi(1)}, w_{\phi(2)}, \dots, w_{\phi(\alpha)}). \]
In fact, as may be seen from the above definitions, $C \wr S_{\alpha}$ is equal to the external semidirect product $C^{\alpha} \rtimes_{\varphi} S_{\alpha}$, where $\varphi : S_\alpha \to \Aut(C^{\alpha})$ is the action of $S_{\alpha}$ of permuting the coordinates of $C^{\alpha}$. For a more detailed description of the wreath product see \cite{AS09}.

\begin{theorem}[\cite{CRG17}] \label{th:ICA}
Let $G$ be a finite group and $A$ a finite set of size $q \geq 2$. Let $[H_1], \dots, [H_r]$ be the list of all different conjugacy classes of subgroups of $G$. Let $\alpha_i :=\alpha_{[H_i]}(G ; A)$. Then,
\[ \ICA(G;A) \cong \prod_{i=1}^{r} \left( (N_{G}(H_i)/H_i) \wr S_{\alpha_i} \right). \]
\end{theorem}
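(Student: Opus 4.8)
The plan is to identify $\ICA(G;A)$ with the automorphism group of $A^G$ regarded as a $G$-set under the shift action, and then to compute this automorphism group directly from the orbit decomposition of $A^G$.

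First I would reduce the statement to a purely $G$-set one. Since $G$ is finite, the configuration space $A^G$ is a finite set, so its prodiscrete topology is the discrete topology and \emph{every} self-map of $A^G$ is continuous. By the Curtis--Hedlund Theorem, $\CA(G;A)$ is therefore precisely the monoid of all shift-commuting maps $A^G\to A^G$, that is, the monoid $\mathrm{End}_G(A^G)$ of $G$-equivariant self-maps of the $G$-set $A^G$. Passing to units, $\ICA(G;A)=\Aut_G(A^G)$, the group of $G$-equivariant permutations of $A^G$.

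Next I would decompose $A^G$ as a $G$-set. The $G$-orbits partition $A^G$, and by the orbit--stabiliser correspondence each orbit $Gx$ is $G$-isomorphic to the coset space $G/G_x$. Two transitive $G$-sets $G/H$ and $G/K$ are $G$-isomorphic if and only if $H$ and $K$ are conjugate (a $G$-map $G/H\to G/K$ exists iff some conjugate of $H$ lies in $K$, and between transitive $G$-sets of equal cardinality any $G$-map is automatically a bijection). Collecting the orbits by the conjugacy class of their stabiliser, and recalling that $\alpha_i=\alpha_{[H_i]}(G;A)$ counts exactly the orbits whose stabiliser lies in $[H_i]$, I obtain a $G$-isomorphism $A^G\cong\bigsqcup_{i=1}^{r}\alpha_i\cdot(G/H_i)$, where $\alpha_i\cdot(G/H_i)$ denotes the disjoint union of $\alpha_i$ copies of $G/H_i$.

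I would then argue that these isotypic blocks are preserved by every automorphism. Any $\Phi\in\Aut_G(A^G)$ permutes the orbits and restricts on each orbit to a $G$-isomorphism onto its image; since non-conjugate stabilisers give non-isomorphic orbits, $\Phi$ must map each block $\alpha_i\cdot(G/H_i)$ to itself. Hence $\Phi$ is determined by its restrictions to the blocks, yielding $\Aut_G(A^G)\cong\prod_{i=1}^{r}\Aut_G\!\left(\alpha_i\cdot(G/H_i)\right)$. It then remains to evaluate each factor via two standard computations: (a) $\Aut_G(G/H)\cong N_G(H)/H$, realised by the right translations $R_a\colon gH\mapsto gaH$ for $a\in N_G(H)$, which are well defined precisely when $a$ normalises $H$ and have kernel exactly $H$; and (b) for any transitive $G$-set $X$ with $C:=\Aut_G(X)$, an automorphism of $\alpha\cdot X$ is encoded by the permutation $\phi\in S_\alpha$ it induces on the $\alpha$ copies together with one element of $C$ recording its action on each copy, giving $\Aut_G(\alpha\cdot X)\cong C\wr S_\alpha$. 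Specialising $X=G/H_i$ and $C=N_G(H_i)/H_i$ produces each factor, and the theorem follows.

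I expect the main obstacle to be the bookkeeping in step (b): one must verify not merely that the underlying set of $\Aut_G(\alpha\cdot X)$ is $C^\alpha\times S_\alpha$, but that composition of automorphisms corresponds exactly to the wreath product multiplication $(v;\phi)\cdot(w;\psi)=(v\cdot w^\phi;\phi\psi)$ fixed above. Getting this right forces compatible conventions throughout --- left versus right cosets in (a), and whether $\phi$ records the source-to-target or target-to-source labelling of the copies in (b) --- so that the coordinate permutation of the semidirect product lines up with the order in which the automorphisms are composed.
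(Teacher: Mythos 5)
Your argument is correct, and it reconstructs essentially the proof given in the cited source \cite{CRG17} (the paper itself states Theorem \ref{th:ICA} without proof): identify $\ICA(G;A)$ with $\Aut_G(A^G)$ via Curtis--Hedlund in the finite case, decompose $A^G$ into isotypic blocks of orbits indexed by conjugacy classes of stabilisers, and use $\Aut_G(G/H)\cong N_G(H)/H$ together with $\Aut_G(\alpha\cdot X)\cong \Aut_G(X)\wr S_\alpha$. The bookkeeping issues you flag (left cosets, the anti-homomorphism $a\mapsto R_a$, and the orientation of the coordinate permutation) are real but routine and are handled the same way in \cite{CRG17}.
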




\section{Upper bounds for ranks} \label{upper}

The $\Rank$ function on monoids does not behave well when taking submonoids or subgroups: in other words, if $N$ is a submonoid of $M$, there may be no relation between $\Rank(N)$ and $\Rank(M)$. For example, if $M=S_n$ is the symmetric group of degree $n \geq 3$ and $N$ is a subgroup of $S_n$ generated by $\lfloor \frac{n}{2} \rfloor$ commuting transpositions, then $\Rank(S_n) = 2$, as $S_n$ may be generated by a transposition and an $n$-cycle, but $\Rank(N) = \lfloor \frac{n}{2} \rfloor$. It is even possible that $M$ is finitely generated but $N$ is not finitely generated (such as the case of the free group on two symbols and its commutator subgroup). However, the following lemma gives us some elementary tools to bound the rank in some cases.

\begin{lemma}\label{le:basic}
Let $G$ and $H$ be a groups, and let $N$ be a normal subgroup of $G$. Then:
\begin{enumerate}
\item $\Rank(G/N) \leq \Rank(G)$.
\item $\Rank(G \times H) \leq \Rank(G) + \Rank(H)$.
\item $\Rank( G \wr S_\alpha) \leq \Rank(G) + \Rank(S_\alpha)$, for any $\alpha \geq 1$.
\item $\Rank(\mathbb{Z}_d \wr S_\alpha) = 2$, for any $d, \alpha \geq 2$. . 
\end{enumerate}
\end{lemma}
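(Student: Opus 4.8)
The plan is to treat the four parts in increasing order of difficulty, with part (4) carrying essentially all of the work.

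For (1), if $T$ generates $G$ with $|T|=\Rank(G)$, then the image $\pi(T)$ under the quotient map $\pi\colon G\to G/N$ generates $G/N$ and satisfies $|\pi(T)|\le|T|$, so $\Rank(G/N)\le\Rank(G)$. For (2), I would take minimal generating sets $S$ of $G$ and $T$ of $H$ and consider $\{(s,1_H):s\in S\}\cup\{(1_G,t):t\in T\}$ in $G\times H$; the two families commute coordinatewise and recover the factors $G\times\{1\}$ and $\{1\}\times H$ separately, so their union generates $G\times H$, giving $\Rank(G\times H)\le|S|+|T|$.

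For (3), write $G\wr S_\alpha=G^\alpha\rtimes S_\alpha$ and pick minimal generating sets $S$ of $G$ and $P$ of $S_\alpha$. I would use $\{(s,1,\dots,1;\id):s\in S\}\cup\{(1,\dots,1;\phi):\phi\in P\}$. The second family generates the top copy of $S_\alpha$; conjugating a first-coordinate element $(s,1,\dots,1;\id)$ by $(1,\dots,1;\phi)$ moves $s$ into coordinate $\phi^{-1}(1)$, and since $S_\alpha$ acts transitively on $\{1,\dots,\alpha\}$ this realises each $s$ in every coordinate, so the whole base group $G^\alpha$ is generated. Together with the top $S_\alpha$ this gives all of $G\wr S_\alpha$, hence $\Rank(G\wr S_\alpha)\le|S|+|P|$. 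Note that for part (4) this already yields $\Rank(\mathbb{Z}_d\wr S_\alpha)\le\Rank(\mathbb{Z}_d)+\Rank(S_\alpha)=1+\Rank(S_\alpha)$, which equals $2$ when $\alpha=2$; so the genuinely new content of (4) is the case $\alpha\ge3$, where (3) only gives the bound $3$ and one must save a generator by letting a single element serve both the cyclic factor and the $\alpha$-cycle of $S_\alpha$.

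For (4), the lower bound is immediate: for $d,\alpha\ge2$ the group $\mathbb{Z}_d\wr S_\alpha$ is non-abelian (a transposition in the top group conjugates a base vector supported on one coordinate to one supported on another), hence not cyclic, so $\Rank\ge2$. For the upper bound I would exhibit two explicit generators. Writing $\mathbb{Z}_d=\langle c\rangle$ and letting $e_i$ denote the base vector with $c$ in coordinate $i$ and the identity elsewhere, I propose $a:=(e_1;\sigma)$ with $\sigma=(1\,2\,\cdots\,\alpha)$ and $b:=(1,\dots,1;(1\,2))$. Under $\pi\colon\mathbb{Z}_d\wr S_\alpha\to S_\alpha$ these map to an $\alpha$-cycle and a transposition, a classical generating pair of $S_\alpha$, so $\langle a,b\rangle$ surjects onto $S_\alpha$; it then remains to prove $\langle a,b\rangle\supseteq\mathbb{Z}_d^\alpha$, and this is the main obstacle. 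Set $K:=\langle a,b\rangle\cap\mathbb{Z}_d^\alpha$, a normal subgroup of $\langle a,b\rangle$ and hence, via the surjection onto $S_\alpha$ acting by coordinate permutation, an $S_\alpha$-invariant subgroup of $\mathbb{Z}_d^\alpha$; it suffices to show $K$ contains a single $e_i$, since transitivity then forces all $e_j\in K$ and $K=\mathbb{Z}_d^\alpha$. One computes $a^\alpha=(c,c,\dots,c;\id)$, the diagonal vector, which lies in $K$, and evaluating a relator of $S_\alpha$ (for instance $(ab)^m$, where $m$ is the order of $\pi(ab)$) yields a further element of $K$. I expect the delicate point to be exactly the extraction of one $e_i$ from these: using the coordinate-sum homomorphism $\Phi(v;\phi):=\sum_i v_i$ onto $\mathbb{Z}_d$ (well defined since permuting coordinates preserves the sum), the diagonal has $\Phi$-value $\alpha c$, which generates only $\alpha\mathbb{Z}_d$, so the $S_\alpha$-submodule generated by the diagonal alone is proper whenever $\gcd(\alpha,d)>1$. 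The argument must therefore genuinely combine the diagonal with the relator element, taking differences and exploiting the $S_\alpha$-action to isolate one $e_i$, and the bookkeeping depends on $\gcd(\alpha,d)$ and on the parities of $\alpha$ and $d$; a clean way to close it is to appeal to the submodule structure of the permutation module $\mathbb{Z}_d^\alpha$ to conclude that an $S_\alpha$-submodule containing both the diagonal and an element not lying in the augmentation submodule must be all of $\mathbb{Z}_d^\alpha$.
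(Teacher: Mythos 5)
Your parts (1)--(3) are correct and standard; the paper itself gives no more detail, declaring (1) and (2) straightforward and citing \cite{CRG17} and \cite{CRG16a} for (3) and (4). The problem is part (4), which you correctly identify as the only part with real content for $\alpha\geq 3$, but which you do not actually prove --- and, worse, the generating pair you propose fails in general. Take $d=2$ and $\alpha=3$, write $\mathbb{Z}_2=\langle c\rangle$ multiplicatively, and set $a=((c,1,1);(1\,2\,3))$, $b=((1,1,1);(1\,2))$ as you suggest. Using the multiplication $(v;\phi)(w;\psi)=(v\cdot w^{\phi};\phi\psi)$ one computes $bab^{-1}=((1,c,1);(1\,3\,2))$ and also $a^{-1}=((1,c,1);(1\,3\,2))$, so $bab^{-1}=a^{-1}$. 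Hence $\langle a,b\rangle$ is a quotient of $\langle a,b\mid b^{2}=1,\ bab^{-1}=a^{-1}\rangle$, i.e.\ a dihedral group; since $a$ has order $6$, $\vert\langle a,b\rangle\vert\leq 12$, whereas $\vert\mathbb{Z}_2\wr S_3\vert=48$. Concretely, your subgroup $K=\langle a,b\rangle\cap\mathbb{Z}_2^{3}$ is exactly the diagonal $\{(1,1,1),(c,c,c)\}$, so the pair does not generate.

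Even apart from this counterexample, the decisive step of your argument --- extracting a single $e_i$ from the diagonal $a^{\alpha}$ together with some relator element --- is explicitly deferred to unspecified ``bookkeeping'' and an appeal to the submodule structure of $\mathbb{Z}_d^{\alpha}$ as an $S_\alpha$-module. The computation above shows this cannot be waved through: for a poorly chosen pair the relator elements contribute nothing outside the diagonal, and the $S_\alpha$-submodule generated by the diagonal alone is proper (as you yourself note when $\gcd(\alpha,d)>1$, and as happens here with $\alpha=3$, $d=2$). A correct proof must choose the two generators more carefully (for instance, distributing the nontrivial base entry and the permutations so that a relator genuinely produces an element outside the span of the diagonal, with cases depending on divisibility conditions); this is precisely the content of Lemma 5 of \cite{CRG16a}, which the paper cites instead of reproving. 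As written, part (4) is not established.
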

\begin{proof}
Parts 1 and 2 are straightforward. For parts 3 and 4, see \cite[Corollary 5]{CRG17} and \cite[Lemma 5]{CRG16a}, respectively.
\end{proof}

We shall use Lemma \ref{le:basic} together with Theorem \ref{th:ICA} in order to find upper bounds for $\ICA(G;A)$. Because of part 3 in Lemma \ref{le:basic}, it is now relevant to determine some values of the $\alpha_i$'s that appear in Theorem \ref{th:ICA}.

\begin{lemma} \label{alpha}
Let $G$ be a finite group and $A$ a finite set of size $q \geq 2$. Let $H$ be a subgroup of $G$.
\begin{enumerate}
\item $\alpha_{[G]}(G;A) = q$. 
\item $\alpha_{[H]}(G;A) = 1$ if and only if $[G : H] = 2$ and $q=2$.
\item If $q \geq 3$, then $\alpha_{[H]}(G;A)  \geq 3$. 
\end{enumerate} 
\end{lemma}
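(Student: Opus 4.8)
The plan is to reduce all three parts to a single counting problem: for each subgroup $H \leq G$, understand the set $X_{[H]} := \{ x \in A^G : [G_x] = [H] \}$, whose number of $G$-orbits is exactly $\alpha_{[H]}(G;A)$. The key preliminary observation is that $H \subseteq G_x$ if and only if $x$ is constant on the right cosets $Hg$ of $H$: this is immediate from the shift action, since $h \cdot x = x$ for all $h \in H$ rewrites as $x(hg) = x(g)$ for all $h \in H,\ g \in G$. Hence the configurations fixed by $H$ correspond bijectively to functions $f$ from the coset set $H \backslash G$ into $A$, of which there are $q^{[G:H]}$. Moreover, for $H \subseteq K \subseteq G$ each right coset of $K$ is a disjoint union of exactly $[K:H]$ right cosets of $H$, so $K \subseteq G_x$ holds if and only if $f$ is constant on each such block. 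Finally, every $x \in X_{[H]}$ has $\vert G_x \vert = \vert H \vert$, so by orbit--stabiliser each of its orbits has size $[G:H]$; this gives the working formula $\alpha_{[H]}(G;A) = \vert X_{[H]} \vert / [G:H]$, and it remains to estimate $\vert X_{[H]} \vert$.

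For Part 1, taking $K = G$ in the block description shows $G_x = G$ forces $f$ to be constant, i.e. $x$ is a constant configuration; there are exactly $q$ of these, each a fixed point and hence its own orbit, so $\alpha_{[G]}(G;A) = q$. For the remaining parts I would split on the index $n := [G:H]$. When $n = 2$ the subgroup $H$ is normal and the only intermediate subgroups are $H$ and $G$, so a configuration constant on the two cosets lies in $X_{[H]}$ precisely when it is non-constant; subtracting the $q$ constant configurations from the $q^2$ total yields $\vert X_{[H]}\vert = q^2 - q$ and $\alpha_{[H]}(G;A) = q(q-1)/2$. When $n \geq 3$ I would exhibit many configurations with stabiliser exactly $H$ by letting $f$ take one value $a$ on a single chosen coset and a second value $b \neq a$ on all others: if $K \subseteq G_x$, then $f$ is constant on blocks of common size $[K:H]$, but the block containing the distinguished coset cannot be constant unless it is a singleton, forcing $K = H$. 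These all lie in $X_{[H]}$ and are pairwise distinct for $n \geq 3$ (the distinguished coset and both values are recoverable from $f$), so there are $n q (q-1)$ of them, giving $\alpha_{[H]}(G;A) = \vert X_{[H]}\vert / n \geq q(q-1)$.

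It then remains to read off the two statements. Summarising, $\alpha_{[H]}(G;A)$ equals $q \geq 2$ when $n = 1$, equals $q(q-1)/2$ when $n = 2$, and is at least $q(q-1) \geq 2$ when $n \geq 3$. Thus the value $1$ is attained exactly when $q(q-1)/2 = 1$ with $n = 2$, that is, when $q = 2$ and $[G:H] = 2$, which is Part 2; and for $q \geq 3$ each case gives at least $3$ (using $q \geq 3$, $q(q-1)/2 \geq 3$, and $q(q-1) \geq 6$), which is Part 3. The step requiring the most care is the stabiliser computation for $n \geq 3$: one must use the block structure of cosets to certify that \emph{no} subgroup strictly larger than $H$ fixes the constructed configurations, and to confirm that the count $n q(q-1)$ contains no repetitions; the rest is elementary bookkeeping with the formula $\alpha_{[H]}(G;A) = \vert X_{[H]}\vert / [G:H]$.
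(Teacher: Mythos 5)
Your proof is correct, but it takes a genuinely different and more self-contained route than the paper. The paper simply cites parts 1 and 2 from \cite{CRG17} and proves only part 3, by exhibiting three explicit configurations $z_1,z_2,z_3$ (constant on $H$ and on its complement, with suitably chosen values) that lie in distinct orbits and have stabiliser $H$. You instead set up the orbit-counting identity $\alpha_{[H]}(G;A)=\vert X_{[H]}\vert/[G:H]$, identify configurations with $H\subseteq G_x$ with functions on $H\backslash G$, and then do an exact or lower-bound count case by case on the index $n=[G:H]$. This buys you more: exact values $\alpha_{[H]}=q$ for $n=1$ and $q(q-1)/2$ for $n=2$, and the bound $\alpha_{[H]}\geq q(q-1)$ for $n\geq 3$, which is strictly stronger than the paper's part 3 and from which all three parts follow by inspection. (Your argument is essentially a hands-on instance of the M\"obius-function computation of the $\alpha_{[H]}$'s mentioned in Section~\ref{basic}.) All the delicate steps check out: the block description of $K$-cosets as unions of $[K:H]$ right $H$-cosets correctly certifies that your ``one coset $a$, the rest $b$'' configurations have stabiliser exactly $H$ when $n\geq 2$, the $nq(q-1)$ such configurations are pairwise distinct once $n\geq 3$, and for $n=2$ normality of $H$ makes the exact count $q^2-q$ legitimate. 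Incidentally, your argument also avoids a small blemish in the paper's proof of part 3, which as written degenerates when $H=G$ (there $z_1=z_3$), a case your $n=1$ analysis handles separately.
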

\begin{proof}
Parts 1 and 2 correspond to Remark 1 and Lemma 5 in \cite{CRG17}, respectively. For part 2, Suppose that $q \geq 3$ and $\{0,1,2 \} \subseteq A$. Define configurations $z_1, z_2, z_3 \in A^G$ as follows,
\[ 
z_1 (g)  = \begin{cases}
1 & \text{if } g \in H \\
0 & \text{if } g \not\in H, 
\end{cases} \ \ 
z_2 (g)  = \begin{cases}
2 & \text{if } g \in H \\
0 & \text{if } g \not\in H, 
\end{cases} \ \ 
z_3 (g)  = \begin{cases}
1 & \text{if } g \in H \\
2 & \text{if } g \not\in H, 
\end{cases} \ \ 
\]
All three configurations are in different orbits and  $G_{z_i} = H$, for $i=1,2,3$. Hence $\alpha_{[H]}(G;A) \geq 3$. 
\end{proof}

Although we shall not use explicitly part 3 of the previous lemma, the result is interesting as it shows that, for $q \geq 3$, our upper bounds cannot be refined by a more careful examination of the values of the $\alpha_i's$, as, for all $\alpha \geq 3$, we have $\Rank(S_\alpha) = 2$. 


\subsection{Dihedral groups} \label{dihedral}

In this section we investigate the rank of $\ICA(D_{2n};A)$, where $D_{2n}$ is the dihedral group of order $2n$, with $n \geq 1$, and $A$ is a finite set of size $q \geq 2$. We shall use the following standard presentation of $D_{2n}$:

\[ D_{2n} = \left\langle \rho, s \; \vert \; \rho^n = s^2 = s\rho s \rho = \id \right\rangle. \]

\begin{lemma}\label{rk-dihedral}
For any $n \geq 2$ and $\alpha \geq 2$,  $\Rank(D_{2n} \wr S_\alpha) \leq 3$. 
\end{lemma}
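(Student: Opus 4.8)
The plan is to prove that $\Rank(D_{2n} \wr S_\alpha) \leq 3$ by exhibiting an explicit generating set of size three. The natural starting point is part~3 of Lemma~\ref{le:basic}, which gives $\Rank(D_{2n} \wr S_\alpha) \leq \Rank(D_{2n}) + \Rank(S_\alpha)$. Since $D_{2n}$ is generated by the rotation $\rho$ and the reflection $s$ we have $\Rank(D_{2n}) \leq 2$, and for $\alpha \geq 3$ we have $\Rank(S_\alpha) = 2$; this only yields the weaker bound $4$, so the point of the lemma is to save one generator by a more careful construction. The key idea is that the symmetric group factor $S_\alpha$ and one of the two dihedral generators can be \emph{combined} into a single element of the wreath product, exploiting the semidirect-product structure $D_{2n} \wr S_\alpha = D_{2n}^\alpha \rtimes S_\alpha$.

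Concretely, I would work inside $D_{2n}^\alpha \rtimes S_\alpha$ and write elements as pairs $(v;\phi)$ with $v \in D_{2n}^\alpha$ and $\phi \in S_\alpha$, as in the definition of the wreath product given before Theorem~\ref{th:ICA}. I propose the three generators
\[
g_1 = ((\rho, 1, \dots, 1); \id), \quad g_2 = ((s,1,\dots,1); \id), \quad g_3 = ((1,1,\dots,1); c),
\]
where $c = (1\, 2\, \cdots\, \alpha)$ is an $\alpha$-cycle. The first step is to check that the top group $S_\alpha$ lies in $\langle g_1, g_2, g_3 \rangle$: conjugating $g_1$ and $g_2$ by powers of $g_3$ produces, for each coordinate $j$, the elements placing $\rho$ or $s$ in position $j$, and from these together with $g_3$ one recovers enough transpositions to generate all of $S_\alpha$ (for instance, a transposition is obtained as a suitable commutator, and a transposition together with the $\alpha$-cycle $c$ generates $S_\alpha$). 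The second step is to observe that once $S_\alpha \leq \langle g_1, g_2, g_3\rangle$ and $g_1, g_2$ place $\rho, s$ in the first coordinate, conjugation by the permutation part moves these into every coordinate, so the full base group $D_{2n}^\alpha$ is generated; combined with $S_\alpha$ this gives the whole semidirect product.

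The main obstacle I anticipate is the low-$\alpha$ boundary case, specifically $\alpha = 2$, where $S_2 \cong \mathbb{Z}_2$ has rank one rather than two and the $\alpha$-cycle $c$ is just the single transposition. Here the commutator trick for extracting transpositions degenerates, so the argument that $S_\alpha$ is recovered must be handled separately; fortunately for $\alpha = 2$ the group $S_2$ is already generated by $g_3$ alone, and one checks directly that $g_1, g_2, g_3$ generate $D_{2n}^2 \rtimes S_2$. A secondary technical point is verifying the conjugation formulas in the semidirect product — that conjugating a base element $(v;\id)$ by $(1;\phi)$ cyclically relabels the coordinates of $v$ according to $\phi$ — but this is a routine consequence of the multiplication rule $(v;\phi)(w;\psi) = (v \cdot w^\phi; \phi\psi)$ and I would state it as a short computation rather than belabour it. Establishing the $\alpha=2$ case and the coordinate-permutation bookkeeping are the only places where genuine care is needed; the rest follows from the standard generation of $S_\alpha$ and $D_{2n}$.
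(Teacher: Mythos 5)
Your overall strategy --- beat the na\"{\i}ve bound $\Rank(D_{2n}) + \Rank(S_\alpha) = 4$ by merging the permutation part with one of the dihedral generators --- is exactly the right idea, and it is what the paper does (it cites the fact that $\langle \rho \rangle \wr S_\alpha \cong \mathbb{Z}_n \wr S_\alpha$ has rank $2$, i.e.\ part 4 of Lemma~\ref{le:basic}, and then adds the single element $((s,\id,\dots,\id);\id)$). However, the explicit generating set you wrote down does not implement that idea, and it fails for every $\alpha \geq 3$. Consider the projection homomorphism $\pi : D_{2n} \wr S_\alpha \to S_\alpha$, $(v;\phi) \mapsto \phi$. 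Your generators satisfy $\pi(g_1) = \pi(g_2) = \id$ and $\pi(g_3) = c$, so $\pi\bigl(\langle g_1, g_2, g_3\rangle\bigr) = \langle c \rangle \cong \mathbb{Z}_\alpha$, which is a proper subgroup of $S_\alpha$ once $\alpha \geq 3$. Hence $\langle g_1, g_2, g_3 \rangle$ is a proper subgroup of $D_{2n} \wr S_\alpha$. The step where you claim to ``recover enough transpositions'' as commutators cannot be repaired: any word in $g_1^{\pm 1}, g_2^{\pm 1}, g_3^{\pm 1}$ projects into $\langle c \rangle$, so no transposition of the top group is ever reached. (Ironically, the one case you flagged as delicate, $\alpha = 2$, is the only case where your set does generate, since there $S_2 = \langle c \rangle$.)

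The fix is to put a nontrivial base entry on top of a nontrivial permutation. For instance, take the two elements that generate $\mathbb{Z}_n \wr S_\alpha \cong \langle \rho \rangle \wr S_\alpha$ (Lemma~\ref{le:basic}(4); these necessarily have permutation parts generating all of $S_\alpha$), and adjoin $((s,\id,\dots,\id);\id)$ as your third generator. The first two give you the full top group $S_\alpha$ and all of $\langle\rho\rangle^\alpha$; conjugating the third by elements covering $S_\alpha$ places $s$ in every coordinate, so the base group $D_{2n}^\alpha$ and hence the whole semidirect product is generated. This is precisely the paper's one-line argument.
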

\begin{proof}
By \cite[Lemma 5]{CRG16a}, we know that $\langle \rho \rangle \wr S_\alpha \cong \mathbb{Z}_n \wr S_\alpha$ may be generated by two elements. Hence, by adding $((s, \id, \dots, \id);\id)$ we may generate the whole $D_{2n} \wr S_\alpha$ with three elements. 
\end{proof}

Given a subgroup $H \leq D_{2n}$, we shall now analyze the quotient group $N_{G}(H)/H$.

\begin{lemma}\label{le-di1}
Let $G =  D_{2n}$ and let $H \leq D_{2n}$ be a subgroup of odd index $m$. Then $H$ is self-normalizing, i.e. $N_{G}(H) = H$.
\end{lemma}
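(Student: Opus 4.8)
The plan is to use the explicit subgroup structure of $D_{2n}$ and reduce the normalizer computation to a pair of congruences modulo $m$, where the oddness of $m$ is exactly what forces $2$ to be invertible.

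First I would show that $H$ must contain a reflection. Every subgroup of the rotation subgroup $\langle \rho \rangle$ has even index in $D_{2n}$, since $[D_{2n} : \langle \rho^t \rangle] = 2\,[\langle \rho \rangle : \langle \rho^t \rangle]$, so an odd index forces $H \not\subseteq \langle \rho \rangle$. Consequently $H \cap \langle \rho \rangle = \langle \rho^m \rangle$ and $H = \langle \rho^m, \rho^i s \rangle$ for some $i$, where $m = [D_{2n} : H]$ is the given odd index; the reflections of $H$ are precisely $\{ \rho^a s : a \equiv i \pmod m \}$.

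Next, since every element of $D_{2n}$ is either a rotation $\rho^a$ or a reflection $\rho^a s$, and since $D_{2n}$ is finite (so that $gHg^{-1} \subseteq H$ already yields $gHg^{-1} = H$), it suffices to determine for which $g$ conjugation carries the two generators $\rho^m$ and $\rho^i s$ back into $H$. Using the relation $s \rho s = \rho^{-1}$, the generator $\rho^m$ is sent to $\rho^{\pm m} \in H$ in all cases, so the only genuine constraint comes from $\rho^i s$. A short computation gives $\rho^a (\rho^i s) \rho^{-a} = \rho^{i+2a} s$ and $(\rho^a s)(\rho^i s)(\rho^a s)^{-1} = \rho^{2a-i} s$, so membership in $H$ amounts to $2a \equiv 0 \pmod m$ in the rotation case and $2a \equiv 2i \pmod m$ in the reflection case.

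The crux, and the only place the hypothesis enters, is that $m$ is odd, hence $\gcd(2,m) = 1$ and $2$ is invertible modulo $m$. The congruences then collapse to $a \equiv 0 \pmod m$ and $a \equiv i \pmod m$, meaning the normalizing rotations are exactly those in $\langle \rho^m \rangle \subseteq H$ and the normalizing reflections are exactly the reflections of $H$. In either case $g \in H$, so $N_G(H) = H$. I expect the main (mild) obstacle to be the bookkeeping with the dihedral relations and confirming that checking the two generators genuinely suffices; the underlying idea, that odd index makes $2$ a unit modulo $m$, is precisely what makes the statement true and what would fail for even index.
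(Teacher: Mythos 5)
Your proof is correct, but it takes a genuinely different route from the paper's. The paper argues by counting: it cites Conrad's results that all subgroups of $D_{2n}$ of odd index $m$ are conjugate and that there are exactly $m$ of them, so $\vert [H] \vert = m = [D_{2n}:H]$, and then the Orbit--Stabilizer Theorem for the conjugation action gives $[D_{2n}:N_G(H)] = \vert [H]\vert = [D_{2n}:H]$, forcing $N_G(H)=H$. You instead work directly from the presentation: you first observe (correctly) that odd index forces $H \not\subseteq \langle \rho\rangle$, hence $H = \langle \rho^m, \rho^i s\rangle$ with $H \cap \langle\rho\rangle = \langle\rho^m\rangle$, and then the conjugation formulas $\rho^a(\rho^i s)\rho^{-a} = \rho^{2a+i}s$ and $(\rho^a s)(\rho^i s)(\rho^a s)^{-1} = \rho^{2a-i}s$ reduce normalization to the congruences $2a\equiv 0$ and $2a \equiv 2i \pmod m$, which collapse to $a\equiv 0$ and $a\equiv i$ precisely because $2$ is a unit modulo the odd number $m$. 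Your reduction to the two generators is legitimate (conjugation is an automorphism, and $gHg^{-1}\subseteq H$ gives equality by finiteness), and the conjugate of $\rho^m$ is indeed always $\rho^{\pm m}\in H$. What your approach buys is self-containedness and a transparent explanation of \emph{why} oddness of the index matters (invertibility of $2$ mod $m$), at the cost of some bookkeeping; the paper's approach is shorter but leans on the external enumeration of subgroups of $D_{2n}$, and the same counting machinery is reused in the subsequent Lemma \ref{le-di2} for the even-index case, which is presumably why the authors chose it.
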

\begin{proof}
By \cite[Theorem 3.3]{Conrad}, all subgroups of $D_{2n}$ with index $m$ are conjugate to each other. By \cite[Corollary 3.2]{Conrad}, there are $m$ subgroups of $D_{2n}$ of index $m$, so $\vert [H] \vert = m = [D_{2n} : H ] $. On the other hand, by the Orbit-Stabilizer Theorem applied to the conjugation action of $D_{2n}$ on its subgroups we have $\vert [H ] \vert = [D_{2n}: N_{G}(H) ]$. Therefore, $[D_{2n} : H ]=[D_{2n}: N_{G}(H) ]$ and $N_{G}(H) = H$. 
\end{proof}

\begin{lemma}\label{le-di2}
Let $G = D_{2n}$ and let $H \leq D_{2n}$ be a proper subgroup of even index $m$. Then $H$ is normal in $D_{2n}$ and $D_{2n}/H \cong D_{m}$, except when $n$ is even, $m \mid n$, and $[H] = [\langle \rho^m, s \rangle]$ or $[H]= [\langle \rho^m, \rho s \rangle]$, in which case $N_{G}(H) / H \cong \mathbb{Z}_2$.  
\end{lemma}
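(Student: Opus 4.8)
The plan is to reduce everything to the explicit subgroup lattice of $D_{2n}$. Recall (see \cite{Conrad}) that every subgroup of $D_{2n} = \langle \rho, s\rangle$ is of exactly one of two types: a \emph{rotation subgroup} $\langle \rho^d\rangle$ with $d \mid n$, which has order $n/d$ and hence index $2d$; or a \emph{reflection subgroup} $\langle \rho^d, \rho^i s\rangle$ with $d \mid n$ and $0 \leq i < d$, which has order $2n/d$ and hence index $d$. First I would observe that a proper subgroup $H$ of even index $m$ must therefore be either the rotation subgroup $\langle \rho^{m/2}\rangle$ (occurring when $m/2 \mid n$), or, when $m$ is even with $m \mid n$, a reflection subgroup $\langle \rho^m, \rho^i s\rangle$. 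Since a reflection subgroup of even index forces $m \mid n$ with $m$ even, and hence $n$ even, this already accounts for the hypotheses attached to the exceptional case. The proof then splits into these two cases, and a small check that they are disjoint and exhaustive shows the stated dichotomy is complete.

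For the rotation case I would argue the generic conclusion. The subgroup $\langle\rho^{m/2}\rangle$ is normal in $D_{2n}$ because conjugation by $s$ inverts $\rho$ and hence preserves $\langle\rho^{m/2}\rangle$. To identify the quotient, note that $D_{2n}/\langle\rho^{m/2}\rangle$ is generated by the images $\bar\rho, \bar s$, which satisfy $\bar\rho^{\,m/2} = \bar s^{2} = \id$ and $\bar s\,\bar\rho\,\bar s = \bar\rho^{-1}$, i.e. exactly the defining relations of $D_m$; since the quotient has order $2n/(2n/m) = m = \vert D_m\vert$, the natural surjection $D_m \to D_{2n}/\langle\rho^{m/2}\rangle$ is an isomorphism, giving $D_{2n}/H \cong D_m$.

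For the reflection case I would compute conjugacy classes directly from the relations. The key formulas are $\rho(\rho^i s)\rho^{-1} = \rho^{i+2}s$ and $s(\rho^i s)s = \rho^{-i}s$, so conjugation carries $\langle\rho^m,\rho^i s\rangle$ to $\langle\rho^m,\rho^{i'}s\rangle$ with $i' \equiv \pm i + 2k \pmod m$. Since $m$ is even, the residues reachable from $i$ are precisely those of the same parity; hence there are exactly two conjugacy classes of reflection subgroups of index $m$, namely $[\langle\rho^m,s\rangle]$ and $[\langle\rho^m,\rho s\rangle]$, each of size $m/2$. Applying the Orbit--Stabiliser Theorem to the conjugation action exactly as in the proof of Lemma \ref{le-di1} gives $[D_{2n}:N_G(H)] = m/2$, so $\vert N_G(H)\vert = 4n/m$, while $\vert H\vert = 2n/m$; therefore $[N_G(H):H] = 2$ and $N_G(H)/H \cong \mathbb{Z}_2$, which is the exceptional conclusion (it remains valid, harmlessly, in the degenerate subcase $m = 2$, where $H$ is already normal since it has index two).

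The main obstacle is the reflection case: one must get the two conjugation formulas exactly right and track the parity of $i$ modulo $m$ with care, since a sign slip or an off-by-one would wrongly merge or split the two conjugacy classes and corrupt the class size $m/2$ that feeds the Orbit--Stabiliser count. A secondary point requiring attention is confirming that the rotation and reflection families are genuinely disjoint and together exhaust the even-index subgroups, so that the exceptional clause captures precisely the reflection subgroups and nothing else.
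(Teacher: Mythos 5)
Your proof is correct and follows essentially the same route as the paper's: both rest on the classification of the subgroups of $D_{2n}$ into rotation and reflection types, and both settle the exceptional case with the identical Orbit--Stabiliser count $[D_{2n}:N_G(H)] = \vert [H]\vert = m/2$, hence $[N_G(H):H]=2$. The only difference is one of packaging --- you derive the conjugacy classes and the isomorphism $D_{2n}/\langle\rho^{m/2}\rangle \cong D_m$ by explicit computation with the defining relations, where the paper outsources exactly these facts to the cited results of Conrad.
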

\begin{proof}
We shall use Corollary 3.2 and Theorem 3.3 in \cite{Conrad}. There are two cases to consider:
\begin{enumerate}
\item Suppose $n$ is odd. Then, $D_{2n}$ has a unique subgroup of index $m$, so $1 = \vert [H] \vert = [D_{2n} : N_{G}(H)]$. This implies that $D_{2n} = N_{G}(H)$, so $H$ is normal in $D_{2n}$.  

\item Suppose that $n$ is even. If $m \nmid n$, $D_{2n}$ has a unique subgroup of index $m$, so $H$ is normal by the same argument as in the previous case. If $m \mid n$, then $D_{2n}$ has $m+1$ subgroups with index $m$ partitioned into $3$ conjugacy classes $[\langle \rho^{m/2} \rangle]$,  $[\langle \rho^m, s \rangle]$ and $[\langle \rho^m, rs \rangle]$ of sizes $1$, $\frac{m}{2}$ and $\frac{m}{2}$, respectively. If $\vert [H] \vert = 1$, again $H$ is normal. If $\vert [H] \vert = \frac{m}{2}$, then $[D_{2n} : N_{G}(H) ] = \frac{1}{2} [D_{2n} : H]$. Hence, $[N_{G}(H) : H] = 2$, and $N_{G}(H) / H \cong \mathbb{Z}_2$.  
\end{enumerate}
The fact that $D_{2n}/H \cong D_{m}$ whenever $H$ is normal follows by \cite[Theorem 2.3]{Conrad}. 
\end{proof}

Let $\dd(n)$ be number of divisors of $n$, including $1$ and $n$ itself. Let $\dd_-(n)$ and $\dd_+(n)$ be the number of odd and even divisors of $n$, respectively.

When $n$ is odd, $D_{2n}$ has exactly $1$ conjugacy class of subgroups of index $m$, for every $m \mid 2n$; hence, if $n$ is odd, $r(D_{2n}) = \dd(2n)$. When $n$ is even, $D_{2n}$ has exactly $1$ conjugacy class of index $m$, when $m \mid 2n$ is odd or $m \nmid n$, and exactly $3$ conjugacy classes when $m \mid 2n$ is even and $m \mid n$; hence, if $n$ is even, $r(D_{2n}) = \dd(2n) + 2\dd_+(n)$.

\begin{theorem} \label{dihedral}
Let $n \geq 3$ be an integer and $A$ a finite set of size at least $2$. 
\[  \Rank(\ICA(D_{2n};A)) \leq
\begin{cases}
 2 \dd_-(2n)  + 3 \dd_+ (2n) - 3 & \text{if $n$ is odd and $q = 2$,} \\
 2 \dd_-(2n)  + 3 \dd_+ (2n) - 1& \text{if $n$ is odd and $q \geq 3$,} \\
  2\dd_- (2n) + 3\dd_+(2n) + 2 \dd_+(n) - 3 & \text{if $n$ is even and $q =2$,} \\
  2\dd_- (2n) + 3\dd_+(2n) + 4 \dd_+(n) - 1 & \text{if $n$ is even and $q \geq 3$.} 
\end{cases} \]
\end{theorem}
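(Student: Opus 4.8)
The plan is to turn the computation into a single additive estimate coming from the product decomposition of Theorem \ref{th:ICA}, and then to carry out a careful enumeration of the resulting factors organized by the index of the corresponding subgroup. By Theorem \ref{th:ICA}, $\ICA(D_{2n};A)$ is isomorphic to a direct product $\prod_i \left( (N_{G}(H_i)/H_i) \wr S_{\alpha_i}\right)$ ranging over the conjugacy classes $[H_i]$ of subgroups of $D_{2n}$. Applying part 2 of Lemma \ref{le:basic} repeatedly yields $\Rank(\ICA(D_{2n};A)) \leq \sum_i \Rank\big((N_{G}(H_i)/H_i) \wr S_{\alpha_i}\big)$. Thus it suffices to (i) identify the isomorphism type of each factor, (ii) bound the rank of each factor, and (iii) sum these bounds over all conjugacy classes.

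For step (i) I would use Lemmas \ref{le-di1} and \ref{le-di2} to pin down $N_G(H)/H$ according to the index $m = [D_{2n}:H]$. When $m$ is odd, Lemma \ref{le-di1} gives $N_G(H) = H$, so the factor collapses to $S_{\alpha_i}$. When $m$ is even, Lemma \ref{le-di2} gives either the generic normal case with dihedral quotient $D_{2n}/H \cong D_m$, so the factor is $D_m \wr S_{\alpha_i}$, or the exceptional situation ($n$ even, $m \mid n$, and $[H]$ one of the two reflection-type classes), where $N_G(H)/H \cong \mathbb{Z}_2$ and the factor is $\mathbb{Z}_2 \wr S_{\alpha_i}$. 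For step (ii) the factor ranks come directly from the earlier lemmas: $\Rank(S_\alpha) \leq 2$; $\Rank(D_m \wr S_{\alpha}) \leq 3$ for $m \geq 4$ and $\alpha \geq 2$ by Lemma \ref{rk-dihedral} (note $D_2 \cong \mathbb{Z}_2$, so the $m=2$ normal class is handled by the next bound); and $\Rank(\mathbb{Z}_2 \wr S_\alpha) = 2$ for $\alpha \geq 2$ by part 4 of Lemma \ref{le:basic}. The parameter $q$ enters only through the multiplicities $\alpha_i$: by part 1 of Lemma \ref{alpha} the index-$1$ class gives the factor $S_q$ (of rank $1$ if $q=2$ and $2$ if $q \geq 3$), and by part 2 of Lemma \ref{alpha} a factor collapses to the quotient group itself (because $\alpha_i = 1$, dropping its contribution) precisely when $q = 2$ and the index is $2$.

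For step (iii) I would tally the contributions using the class counts recorded just before the statement: for $n$ odd there is a single conjugacy class of each index $m \mid 2n$, while for $n$ even the even-index classes with $m \mid n$ come in triples (one dihedral-quotient class together with the two exceptional $\mathbb{Z}_2$-quotient classes), which is what produces the additional $\dd_+(n)$-terms. Separating odd from even divisors (contributing the $2\dd_-(2n)$ and $3\dd_+(2n)$ terms, respectively), adding the exceptional even-$n$ contributions, and finally subtracting the savings coming from the index-$1$ factor and from the $\alpha_i = 1$ collapses at index $2$ (which depend on the parity of $n$ and on whether $q=2$ or $q \geq 3$) should assemble into the four stated expressions.

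The main obstacle I anticipate is the bookkeeping in step (iii) rather than any single hard inequality, since all the structural and rank inputs are already packaged in Lemmas \ref{le:basic}, \ref{alpha}, \ref{rk-dihedral}, \ref{le-di1} and \ref{le-di2}. The delicate points are: (a) correctly counting the three index-$2$ classes when $n$ is even (against the single one when $n$ is odd) and recording that each yields a $\mathbb{Z}_2$-type factor; (b) keeping the two exceptional reflection-type classes for each divisor $m \mid n$ distinct and assigning each its full rank-$2$ contribution whenever $\alpha_i \geq 2$; and (c) determining exactly which factors collapse because $\alpha_i = 1$, which happens only for $q = 2$ at index $2$. Getting these multiplicities and the resulting additive constants right is where I would expect to have to be most careful, and it is precisely where the four cases of the statement differ.
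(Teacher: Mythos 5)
Your plan is exactly the paper's proof: decompose $\ICA(D_{2n};A)$ via Theorem \ref{th:ICA}, identify each factor through Lemmas \ref{le-di1} and \ref{le-di2}, bound the rank of each factor by Lemmas \ref{le:basic}, \ref{alpha} and \ref{rk-dihedral}, and sum over conjugacy classes, with the $q=2$ savings coming from the index-$1$ and index-$2$ classes. Carried out as described, this reproduces the stated bounds in the cases ($n$ odd, $q=2$), ($n$ odd, $q\geq 3$) and ($n$ even, $q\geq 3$).

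There is, however, one case where faithfully executing your plan will \emph{not} yield the stated formula, and your own point (c) is the reason. You correctly note that $\alpha_i=1$ occurs only when $q=2$ and $[G:H_i]=2$. Consequently, for $q=2$ and $n$ even, each even divisor $m$ of $n$ with $m>2$ contributes two exceptional classes $[\langle\rho^m,s\rangle]$ and $[\langle\rho^m,\rho s\rangle]$ with $\alpha_i\geq 2$, so each factor $\mathbb{Z}_2\wr S_{\alpha_i}$ has rank $2$ and the pair contributes $4$, exactly as in the $q\geq 3$ case; only at $m=2$ does the pair collapse to $\mathbb{Z}_2\times\mathbb{Z}_2$ and contribute $2$. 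The exceptional classes therefore give $4\dd_+(n)-2$ rather than $2\dd_+(n)$, and the total becomes $2\dd_-(2n)+3\dd_+(2n)+4\dd_+(n)-5$, which exceeds the stated $2\dd_-(2n)+3\dd_+(2n)+2\dd_+(n)-3$ whenever $\dd_+(n)\geq 2$, i.e.\ for every even $n\geq 4$. The paper's proof reaches the smaller constant by asserting $\mathbb{Z}_2\wr S_{\alpha_i}\cong\mathbb{Z}_2$ for \emph{all} exceptional classes with $m_i\mid n$ even, which is inconsistent with Lemma \ref{alpha}; so the discrepancy is a defect in the paper's argument rather than in yours, but you should be aware that your (more careful) bookkeeping proves only the weaker bound in that one case.
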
 
\begin{proof}
Let $[H_1], \dots, [H_r]$ be the conjugacy classes of subgroups of $D_{2n}$. Let $m_i = [G:H_i]$, with $m_{r-1} =2$ and $m_r = 1$ (i.e. $H_r = D_{2n}$). Define $\alpha_i := \alpha_{[H_i]}(G;A)$. 

Let $n$ be odd. Then, by Theorem \ref{th:ICA}, and Lemmas \ref{le-di1} and \ref{le-di2},
\[ \ICA(D_{2n};A) \cong \prod_{m_i \mid 2n \text{ odd}} S_{\alpha_i} \times \prod_{2<m_i \mid 2n \text{ even}} ( D_{m_i} \wr  S_{\alpha_i} ) \times (\mathbb{Z}_2 \wr S_{\alpha_{r-1}})\]
By Lemmas \ref{le:basic} and \ref{rk-dihedral},
\begin{align*}
\Rank(\ICA(D_{2n};A)) & \leq  \sum_{m_i \mid 2n \text{ odd}} \Rank(S_{\alpha_i}) + \sum_{2< m_i \mid 2n \text{ even}} \Rank( D_{m_i} \wr  S_{\alpha_i} ) + 2 \\
& \leq  2 \dd_-(2n) + 3 (\dd_+(2n) - 1)  + 2 \\ 
& =  2 \dd_-(2n)  + 3 \dd_+ (2n) - 1. 
\end{align*}
When $q=2$, Lemma \ref{alpha} shows that $\alpha_r = 2$ and $\alpha_{r-1} = 1$, so $S_{\alpha_r} \cong S_2$ and $\mathbb{Z}_2 \wr S_{\alpha_{r-1}} \cong \mathbb{Z}_2$. Therefore,
\begin{align*}
\Rank(\ICA(D_{2n};A)) & \leq  1 + \sum_{1 < m_i \mid 2n \text{ odd}} 2 + \sum_{2< m_i \mid 2n \text{ even}} 3 + 1 \\
& \leq  2 (\dd_-(2n) - 1) + 3 (\dd_+(2n) - 1)  + 2 \\ 
& =  2 \dd_-(2n)  + 3 \dd_+ (2n)  - 3. 
\end{align*}

Now let $n$ be even. Then, 
\begin{align*}
\ICA(D_{2n};A) & \cong \prod_{m_i \mid 2n \text{ odd}} S_{\alpha_i} \times \prod_{2< m_i \mid 2n \text{ even}} ( D_{m_i} \wr  S_{\alpha_i} ) \\ 
& \quad \times (\mathbb{Z}_2 \wr S_{\alpha_{r-1}}) \times \prod_{m_i \mid n \text{ even }} ( (\mathbb{Z}_2 \wr S_{\alpha_i}) \times  (\mathbb{Z}_2 \wr S_{\alpha_{i}}))
\end{align*}
Hence, 
\begin{align*}
\Rank(\ICA(D_{2n};A)) & \leq \sum_{m_i \mid 2n \text{ odd}} 2 + \sum_{2< m_i \mid 2n \text{ even}} 3 + 2 + \sum_{m_i \mid n \text{ even}} 4 \\
& =  2\dd_- (2n) + 3( \dd_+(2n) - 1) + 2 + 4 \dd_+(n) \\
& = 2\dd_- (2n) + 3\dd_+(2n) + 4 \dd_+(n) - 1 .
\end{align*}
When $q=2$, by Lemma \ref{alpha} we have $S_{\alpha_r} \cong S_2$, $\mathbb{Z}_2 \wr S_{\alpha_{r-1}} \cong \mathbb{Z}_2$ and $\mathbb{Z}_2 \wr S_{\alpha_{i}} \cong \mathbb{Z}_2$, so
\begin{align*}
\Rank(\ICA(D_{2n};A)) & \leq 1 + \sum_{1 < m_i \mid 2n \text{ odd}} 2 + \sum_{2< m_i \mid 2n \text{ even}} 3 + 1 + \sum_{m_i \mid n \text{ even}} 2 \\
& =  2(\dd_- (2n)-1) + 3( \dd_+(2n) - 1) + 2 \dd_+(n) + 2 \\
& = 2\dd_- (2n) + 3\dd_+(2n) + 2 \dd_+(n) - 3 .
\end{align*}

\end{proof}

\begin{example}
Let $A$ be a finite set of size $q \geq 2$. By the previous theorem,
\[ \Rank(\ICA(D_6; A)) \leq  \begin{cases} 
 2 \dd_-(6)  + 3 \dd_+ (6) - 3 = 2\cdot 2 + 3 \cdot 2 - 3 = 7  & \text{ if } q = 2, \\
2d_-(6) + 3d_+(6) - 1 = 2 \cdot 2 + 3 \cdot 2 - 1 = 9 & \text{ if } q \geq 2.
\end{cases}\]
On the other hand,
\[ \Rank(\ICA(D_8; A)) \leq \begin{cases}
 2\dd_- (8) + 3\dd_+(8) + 2 \dd_+(4) - 3 = 12 & \text{ if } q = 2, \\
  2d_-(8) + 3d_+(8) + 4d_+(4) - 1 = 18 & \text{ if } q \geq 2.  
\end{cases} \]
\end{example}


\subsection{Other finite groups}

Recall that $r(G)$ denotes the total number of conjugacy classes of subgroups of $G$ and $r_i(G)$ the number of conjugacy classes $[H]$ such that $H$ has index $i$ in $G$. The following results are an improvement of \cite[Corollary 5]{CRG17}.

\begin{theorem}\label{cor:bound}
Let $G$ be a finite Dedekind group and $A$ a finite set of size $q \geq 2$. Let $r:=r(G)$ and $r_i := r_i(G)$. Let $p_1, \dots, p_s$ be the prime divisors of $\vert G \vert$ and define $r_P :=\sum_{i=1}^s r_{p_i}$. Then,
\[ \Rank(\ICA(G;A))  \leq \begin{cases}
(r - r_P  - 1) \Rank(G) + 2 r - r_2 - 1,  & \text{ if } q=2, \\
(r - r_P  - 1) \Rank(G) + 2 r, & \text{ if } q \geq 3.
\end{cases} \]
\end{theorem}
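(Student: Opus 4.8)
The plan is to combine the structure theorem (Theorem~\ref{th:ICA}) with the bounds collected in Lemma~\ref{le:basic}, exactly as in the proof for dihedral groups, but now exploiting the fact that in a Dedekind group \emph{every} subgroup is normal, so $N_G(H) = G$ for all $H \leq G$. Consequently the quotients appearing in Theorem~\ref{th:ICA} are all of the form $G/H_i$, and the decomposition becomes
\[ \ICA(G;A) \cong \prod_{i=1}^{r} \bigl( (G/H_i) \wr S_{\alpha_i} \bigr), \]
where $\alpha_i := \alpha_{[H_i]}(G;A)$ and the product runs over all $r = r(G)$ conjugacy classes (which for a Dedekind group are singletons). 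First I would apply parts~2 and~3 of Lemma~\ref{le:basic} to get the crude estimate $\Rank(\ICA(G;A)) \leq \sum_{i=1}^{r}\bigl(\Rank(G/H_i) + \Rank(S_{\alpha_i})\bigr)$, and then refine each summand according to the index $m_i = [G:H_i]$.

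The key refinement is to split the factors into three types according to $m_i$. When $H_i = G$ (index $1$) the factor is just $S_{\alpha_r} = S_q$, contributing $\Rank(S_q)$. When $m_i = p_j$ is prime, the quotient $G/H_i$ is cyclic of prime order (a group of prime order is cyclic), so $(G/H_i)\wr S_{\alpha_i} \cong \mathbb{Z}_{p_j} \wr S_{\alpha_i}$, and part~4 of Lemma~\ref{le:basic} gives rank exactly $2$ (when $\alpha_i \geq 2$), rather than the weaker $\Rank(G/H_i) + \Rank(S_{\alpha_i})$ one would otherwise use; there are $r_P = \sum_j r_{p_j}$ such classes. For the remaining $r - r_P - 1$ classes of composite, non-unit index, I would fall back on part~3 and bound each factor by $\Rank(G/H_i) + \Rank(S_{\alpha_i}) \leq \Rank(G) + 2$, using part~1 of Lemma~\ref{le:basic} (namely $\Rank(G/H_i) \leq \Rank(G)$) and $\Rank(S_{\alpha_i}) = 2$ for $\alpha_i \geq 3$ (which holds automatically when $q \geq 3$ by Lemma~\ref{alpha}, part~3). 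Summing: the composite classes give $(r - r_P - 1)(\Rank(G) + 2)$, the prime-index classes give $2 r_P$, and the identity class gives $\Rank(S_q) = 2$, for a total of $(r - r_P - 1)\Rank(G) + 2r$, which matches the $q \geq 3$ bound.

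For the $q = 2$ case I would track the small values of $\alpha_i$ from Lemma~\ref{alpha} more carefully. By part~2 of Lemma~\ref{alpha}, $\alpha_i = 1$ precisely when $m_i = 2$ and $q = 2$; there are $r_2 = r_2(G)$ such classes, and each contributes a factor $(G/H_i) \wr S_1 \cong \mathbb{Z}_2$ of rank $1$ instead of $2$, saving one generator per index-$2$ class. Likewise, by part~1 of Lemma~\ref{alpha}, $\alpha_r = q = 2$, so the identity factor is $S_2 \cong \mathbb{Z}_2$ of rank $1$ rather than $2$, saving one more generator. The index-$2$ classes are among the prime-index classes counted in $r_P$, so I would begin from the $q \geq 3$ total $(r - r_P - 1)\Rank(G) + 2r$ and subtract: $r_2$ for the index-$2$ classes dropping from rank $2$ to rank $1$, and $1$ for the identity class, yielding $(r - r_P - 1)\Rank(G) + 2r - r_2 - 1$, as claimed.

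The main obstacle is bookkeeping rather than conceptual: one must verify that the index sets partition correctly (that the identity class is excluded from $r_P$, that index-$2$ classes sit inside the prime-index count, and that no class is double-counted), and one must confirm the rank-$2$ wreath-product bound of part~4 of Lemma~\ref{le:basic} applies, i.e. that $\alpha_i \geq 2$ for every prime-index class. The latter needs a small check: by Lemma~\ref{alpha} one has $\alpha_i \geq 3$ when $q \geq 3$, and when $q = 2$ the prime-index classes with $m_i \geq 3$ still satisfy $\alpha_i \geq 2$ since $\alpha_i = 1$ forces $m_i = 2$; the index-$2$ classes with $\alpha_i = 1$ are handled separately as the $\mathbb{Z}_2$ factors above. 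I would also note the implicit assumption $\Rank(G) \geq 1$ and that the formula degenerates gracefully when $r_P$ accounts for all non-trivial classes (e.g. for elementary abelian or small groups), so the coefficient $r - r_P - 1$ remains nonnegative.
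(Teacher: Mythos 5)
Your proposal is correct and follows essentially the same route as the paper's proof: both use that all subgroups of a Dedekind group are normal to reduce Theorem~\ref{th:ICA} to factors $(G/H_i)\wr S_{\alpha_i}$, bound the prime-index factors by $2$ via Lemma~\ref{le:basic}(4), the remaining proper factors by $\Rank(G)+2$, and then shave off $r_2 + 1$ generators when $q=2$ using Lemma~\ref{alpha}. Your extra check that $\alpha_i \geq 2$ for the prime-index classes is a detail the paper leaves implicit, but it does not change the argument.
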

\begin{proof}
Let $H_1, H_2, \dots, H_r$ be the list of different subgroups of $G$ with $H_r = G$. If $H_i$ is a subgroup of index $p_k$, then $(G/H_i)\wr S_{\alpha_i} \cong  \mathbb{Z}_{p_k} \wr S_{\alpha_i}$ is a group with rank $2$, by Lemma \ref{le:basic}. Thus, by Theorem \ref{th:ICA} we have:
\begin{align*}
\Rank(\ICA(G;A)) & \leq  \sum_{i=1}^{r-1} \Rank((G/H_i)\wr S_{\alpha_i}) + \Rank(S_q)  \\ 
& \leq \sum_{[G:H_i] = p_k} 2 + \sum_{[G:H_i] \neq p_k } (\Rank(G) + 2) + 2\\
& =  2 r_P +  (r - r_P - 1 )(\Rank(G) + 2) + 2 \\ 
& =  (r - r_P  - 1) \Rank(G) + 2r .  
\end{align*}
If $q=2$, we may improve this bound by using Lemma \ref{alpha}:
\begin{align*}
\Rank(\ICA(G;A)) & \leq \sum_{[G : H_i]=2}\Rank((G/H_i)\wr S_1) + \sum_{[G : H_i] = p_k \neq 2}\Rank((G/H_i)\wr S_{\alpha_i}) \\ 
& + \sum_{1 \neq [G:H_i] \neq p_k} \Rank((G/H_i)\wr S_{\alpha_i}) + \Rank(S_2) \\  
& = r_2 + 2(r_P - r_2) +  (r - r_P - 1 )(\Rank(G) + 2)  + 1   \\
& =   (r - r_P  - 1) \Rank(G) + 2r - r_2 - 1.  
\end{align*}

 \end{proof}

\begin{example}
The smallest example of a nonabelian Dedekind group is the quaternion group 
\[ Q_8 = \langle x,y \; \vert \; x^4 = x^2 y^{-2} = y^{-1}xy x  = \id  \rangle, \]
which has order $8$. It is generated by two elements, and it is noncyclic, so $\Rank(Q_8) = 2$. Moreover, $r = r(Q_8) = 6$ and, as $2$ is the only prime divisor of $8$, we have $r_P  = r_2 = 3$. Therefore,
\[ \Rank(\ICA(Q_8;A))  \leq \begin{cases}
(6 - 3 - 1) \cdot 2 + 2 \cdot 6 - 3 - 1 = 12,  & \text{if } q=2, \\
(6 - 3  - 1) \cdot 2 + 2 \cdot 6 = 16, & \text{if } q \geq 3.
\end{cases} \]
\end{example}
 
 \begin{corollary}
 Let $G$ be a finite Dedekind group and $A$ a finite set of size $q \geq 2$. With the notation of Theorem \ref{cor:bound},
\[ \Rank(\CA(G;A))  \leq \begin{cases}
(r - r_P  - 1) \Rank(G) + \frac{1}{2} r (r+5) - 2r_2 - 1,  & \text{if } q=2 \\
(r - r_P  - 1) \Rank(G) + \frac{1}{2} r (r+5), & \text{otherwise.}
\end{cases} \]
 \end{corollary}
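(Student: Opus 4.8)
The plan is to derive this bound for $\Rank(\CA(G;A))$ by combining the rank formula \eqref{rank-formula} with the relative rank of $\ICA(G;A)$ in $\CA(G;A)$ and the upper bound for $\Rank(\ICA(G;A))$ already established in Theorem \ref{cor:bound}. Recall that \eqref{rank-formula} gives
\[ \Rank(\CA(G;A)) = \Rank(\CA(G;A) : \ICA(G;A)) + \Rank(\ICA(G;A)), \]
since $\ICA(G;A)$ is precisely the group of units of $\CA(G;A)$. The second summand is bounded directly by Theorem \ref{cor:bound}, so the only new ingredient needed is the value of the relative rank $\Rank(\CA(G;A) : \ICA(G;A))$.

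First I would invoke the result cited in the introduction: by \cite[Theorem 7]{CRG17}, the relative rank of $\ICA(G;A)$ in $\CA(G;A)$ for a finite Dedekind group $G$ has a known closed form. The discrepancy between the two cases of the corollary ($q=2$ versus $q \geq 3$) and the shape of the extra term $\frac{1}{2}r(r+5)$ strongly suggest that this relative rank equals $\frac{1}{2}r(r+5) - 2r$ in the generic case (so that adding the $2r$ coming from Theorem \ref{cor:bound} produces $\frac{1}{2}r(r+5)$), with a correction of $-(r_2+1)$ in the $q=2$ case to match the $-2r_2-1$ in the corollary after accounting for the $-r_2-1$ already present in the $\ICA$ bound. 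I would state the exact value of $\Rank(\CA(G;A):\ICA(G;A))$ precisely as given in \cite{CRG17}, being careful that the quantity $r$ there denotes the same total number $r(G)$ of conjugacy classes of subgroups used here.

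The remaining step is purely arithmetic: substitute both pieces into \eqref{rank-formula} and simplify. For $q \geq 3$, adding the relative rank to the bound $(r-r_P-1)\Rank(G)+2r$ from Theorem \ref{cor:bound} should collapse to $(r-r_P-1)\Rank(G)+\frac{1}{2}r(r+5)$. For $q=2$, I would add the relative rank to the sharper bound $(r-r_P-1)\Rank(G)+2r-r_2-1$ and verify that the $-r_2-1$ and the $q=2$ correction to the relative rank combine to yield $-2r_2-1$. Since Theorem \ref{cor:bound} supplies only an inequality, the result will correctly be stated as $\leq$, and the relative-rank value may be used as an exact equality or an upper bound without affecting the final inequality.

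The main obstacle is bookkeeping rather than conceptual: I must ensure the relative-rank formula from \cite{CRG17} is transcribed in the notation of this paper (matching $r$, $r_i$, $r_P$) and that the two index corrections in the $q=2$ case are not double-counted. In particular, the $-r_2-1$ term appearing in the $\ICA$ bound reflects the special small cases where $\alpha_i=1$, and I would check that the relative-rank formula does not independently adjust for those same index-$2$ subgroups, so that their contributions add rather than overlap. Once the correct relative-rank expression is in hand, the verification is a direct substitution and the two stated cases follow immediately.
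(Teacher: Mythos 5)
Your proposal follows exactly the paper's route: identity (\ref{rank-formula}), the relative-rank bound for finite Dedekind groups from \cite[Theorem 7]{CRG17}, and the $\ICA$ bound of Theorem \ref{cor:bound}, followed by direct arithmetic. One small correction to your reverse-engineered guess: the cited relative rank is $\binom{r}{2}+r$ (which indeed equals $\frac{1}{2}r(r+5)-2r$) with a $q=2$ adjustment of $-r_2$, not $-(r_2+1)$; combined with the $-r_2-1$ already in the $\ICA$ bound this yields the stated $-2r_2-1$, whereas your guessed adjustment would give $-2r_2-2$.
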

 \begin{proof}
 The result follows by Theorem \ref{cor:bound}, identity (\ref{rank-formula}) and the basic upper bound for the relative rank that follows from \cite[Theorem 7]{CRG17}:
 \[ \Rank(\CA(G;A):\ICA(G;A)) \leq \begin{cases} 
 \binom{r}{2} + r - r_2 & \text{if } q=2 \\
 \binom{r}{2} + r, & \text{otherwise.}
 \end{cases}  \]

 \end{proof}

Now focus now when $G$ is not necessarily a Dedekind group. 

\begin{lemma}\label{le-aux1}
Let $G$ be a finite group and $H$ a subgroup of $G$ of prime index $p$. Let $A$ be a finite set of size $q \geq 2$ and $\alpha := \alpha_{[H]}(G;A)$. Then
\[ \Rank\left( (N_{G}(H)/H) \wr S_{\alpha} \right) \leq \begin{cases}
1 & \text{if } p=2 \text{ and } q=2\\
2 & \text{otherwise}.
\end{cases} \] 
\end{lemma}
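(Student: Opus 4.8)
The plan is to analyze the quotient group $N_G(H)/H$ for a subgroup $H$ of prime index $p$, and then apply the bounds on ranks of wreath products already available from Lemma \ref{le:basic}. The key structural observation is that since $H$ has prime index $p$, the normaliser $N_G(H)$ satisfies $H \leq N_G(H) \leq G$, and because $[G:H] = p$ is prime, there is no room for an intermediate subgroup strictly between $H$ and $G$. Therefore $N_G(H)$ is either $H$ itself or all of $G$. In the first case $N_G(H)/H$ is trivial, and in the second case $H$ is normal with $[G:H] = p$, so $N_G(H)/H = G/H \cong \mathbb{Z}_p$, since any group of prime order is cyclic. Thus in all cases $N_G(H)/H$ is either trivial or isomorphic to $\mathbb{Z}_p$.

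With this reduction, I would split into the two cases according to the value of $N_G(H)/H$. If $N_G(H)/H$ is trivial, then $(N_G(H)/H) \wr S_\alpha \cong S_\alpha$, whose rank is at most $2$ for all $\alpha \geq 1$ (it is $1$ when $\alpha \leq 2$ and $2$ when $\alpha \geq 3$); when additionally $p = 2$ and $q = 2$, part 2 of Lemma \ref{alpha} gives $\alpha_{[H]}(G;A) = 1$, so $S_\alpha \cong S_1$ is trivial with rank $\leq 1$, matching the claimed bound. If instead $N_G(H)/H \cong \mathbb{Z}_p$, then for $\alpha \geq 2$ we have $\mathbb{Z}_p \wr S_\alpha$ with rank exactly $2$ by part 4 of Lemma \ref{le:basic}, and for $\alpha = 1$ the group is just $\mathbb{Z}_p$ with rank $1$; either way the rank is at most $2$, giving the ``otherwise'' bound. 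The only case needing the sharper bound of $1$ is $p = 2$ and $q = 2$: here $[G:H] = 2$ and $q = 2$ force $\alpha = 1$ by Lemma \ref{alpha}, so the wreath product collapses to $\mathbb{Z}_2$ (or the trivial group), which has rank at most $1$.

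The main point to be careful about is the interaction between the two parameters, namely the value of $\alpha = \alpha_{[H]}(G;A)$ and the structure of $N_G(H)/H$. The bound of $1$ is achieved precisely when $p = 2$ and $q = 2$ because this is exactly the hypothesis under which Lemma \ref{alpha}(2) forces $\alpha = 1$, collapsing the symmetric group factor and leaving at most a single cyclic factor $\mathbb{Z}_2$. I expect the verification that these two independent conditions conspire correctly — that $p = 2, q = 2$ simultaneously forces $\alpha = 1$ and caps the quotient at $\mathbb{Z}_2$ — to be the only subtle step, though it is immediate once Lemma \ref{alpha} is invoked. All remaining estimates follow directly from Lemma \ref{le:basic} and the elementary fact that $\Rank(S_\alpha) \leq 2$.
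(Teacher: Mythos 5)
Your proposal is correct and follows essentially the same route as the paper: use Lagrange (primality of the index) to force $N_G(H)\in\{H,G\}$, hence $N_G(H)/H$ trivial or $\cong\mathbb{Z}_p$, then invoke Lemma \ref{alpha}(2) to get $\alpha=1$ exactly when $p=q=2$ and Lemma \ref{le:basic}(4) for the remaining cases. The only (harmless) slip is the parenthetical claim that $\Rank(S_\alpha)=1$ for $\alpha\leq 2$, since $S_1$ is trivial and has rank $0$; this does not affect the bound.
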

\begin{proof}
By Lagrange's theorem, $N_{G}(H)=H$ or $N_{G}(H)=G$. Hence, in order to find an upper bound for the above rank, we assume that $H$ is normal in $G$. As the index is prime, $G/H \cong \mathbb{Z}_p$. If $p=2$ and $q=2$, Lemma \ref{alpha} shows that $\alpha = 1$, so $\Rank(\mathbb{Z}_2 \wr S_{1})  = 1$. For the rest of the cases we have that $\Rank(\mathbb{Z}_p \wr S_{\alpha}) = 2$, by Lemma \ref{le:basic}.

\end{proof}

The \emph{length} of $G$ (see \cite[Sec. 1.15]{C94}) is the length $\ell := \ell(G)$ of the longest chain of proper subgroups
\[ 1=G_0 < G_1 < \dots < G_\ell = G. \]
The lengths of the symmetric groups are known by \cite{CST89}: $\ell(S_n) = \lceil 3n/2 \rceil - b(n)-1$, where $b(n)$ is the numbers of ones in the base $2$ expansion of $n$. As, $\ell(G) = \ell(N) + \ell(G/N)$ for any normal subgroup $N$ of $G$, the length of a finite group is equal to the sum of the lengths of its compositions factors; hence, the question of calculating the length of all finite groups is reduced to calculating the length of all finite simple groups. Moreover, $\ell(G) \leq \log_2(\vert G \vert)$ (see \cite[Lemma 2.2]{CST89}).  

\begin{lemma}\label{le-aux2}
Let $G$ be a finite group and $H$ a subgroup of $G$. Let $A$ be a finite set of size $q \geq 2$ and $\alpha := \alpha_{[H]}(G;A)$. Then,
\[ \Rank\left( (N_{G}(H)/H) \wr S_{\alpha} \right) \leq \ell(G) + 2 \] 
\end{lemma}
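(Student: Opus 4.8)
The plan is to bound the rank of the wreath product $(N_G(H)/H) \wr S_\alpha$ by controlling each of its two factors separately via part 3 of Lemma \ref{le:basic}, which gives
\[ \Rank\left( (N_{G}(H)/H) \wr S_{\alpha} \right) \leq \Rank(N_{G}(H)/H) + \Rank(S_\alpha). \]
The key observation is that $\ell(G) + 2$ naturally decomposes as a bound on each summand: I expect to show $\Rank(N_{G}(H)/H) \leq \ell(G)$ and $\Rank(S_\alpha) \leq 2$, and then add these. The second bound is immediate and essentially already noted in the paper: for any $\alpha \geq 1$ we have $\Rank(S_\alpha) \leq 2$, since $S_\alpha$ (for $\alpha \geq 3$) is generated by a transposition and an $\alpha$-cycle, while $S_1$ is trivial and $S_2 \cong \mathbb{Z}_2$ each have rank at most $2$ (indeed at most $1$).

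The main work is the bound $\Rank(N_{G}(H)/H) \leq \ell(G)$. First I would recall that any finite group $K$ satisfies $\Rank(K) \leq \ell(K)$: a minimal generating set can be built greedily by choosing elements $g_1, g_2, \dots$ so that each new generator strictly enlarges the generated subgroup, producing a strictly increasing chain $\langle g_1 \rangle < \langle g_1, g_2 \rangle < \cdots$ of length equal to $\Rank(K)$, which cannot exceed the maximal chain length $\ell(K)$. Applying this with $K = N_{G}(H)/H$ gives $\Rank(N_{G}(H)/H) \leq \ell(N_{G}(H)/H)$, so it remains to show $\ell(N_{G}(H)/H) \leq \ell(G)$. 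For this I would use the additivity of length under the normal subgroup $H \trianglelefteq N_{G}(H)$, namely $\ell(N_{G}(H)) = \ell(H) + \ell(N_{G}(H)/H)$ (stated in the excerpt just before the lemma), together with the fact that length is monotone under taking subgroups: any subgroup chain witnessing $\ell(N_{G}(H))$ is also a chain of subgroups of $G$, so $\ell(N_{G}(H)) \leq \ell(G)$. Combining these,
\[ \ell(N_{G}(H)/H) = \ell(N_{G}(H)) - \ell(H) \leq \ell(N_{G}(H)) \leq \ell(G), \]
which is exactly what is needed.

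Assembling the pieces, I obtain
\[ \Rank\left( (N_{G}(H)/H) \wr S_{\alpha} \right) \leq \Rank(N_{G}(H)/H) + \Rank(S_\alpha) \leq \ell(G) + 2. \]
The step I expect to require the most care is the passage $\ell(N_{G}(H)/H) \leq \ell(G)$: one must be explicit that subgroup length is monotone (every chain in a subgroup sits inside $G$) and invoke the additive behavior of $\ell$ across the quotient correctly, since discarding the $\ell(H)$ term is what makes the bound clean. Everything else — the greedy chain argument for $\Rank(K) \leq \ell(K)$ and the trivial bound on $\Rank(S_\alpha)$ — is routine, so the heart of the proof is really just the reduction of the quotient's length to the ambient group's length.
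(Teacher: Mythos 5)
Your proof is correct and follows essentially the same route as the paper: split the wreath product via Lemma \ref{le:basic}(3), bound $\Rank$ by the length $\ell$, and compare lengths inside $G$. The only cosmetic difference is that the paper discards the quotient by $H$ at the level of rank (using $\Rank(N_{G}(H)/H) \leq \Rank(N_{G}(H))$ from Lemma \ref{le:basic}(1) and then $\Rank(N_{G}(H)) \leq \ell(N_{G}(H)) \leq \ell(G)$), whereas you discard it at the level of length via the additivity $\ell(N_{G}(H)) = \ell(H) + \ell(N_{G}(H)/H)$; both steps are valid.
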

\begin{proof}
By Lemma \ref{le:basic}, $\Rank\left( (N_{G}(H)/H) \wr S_{\alpha} \right) \leq \Rank(N_{G}(H)) + 2$. Observe that $\Rank(G) \leq \ell(G)$, as the set $\{ g_i \in G : g_i \in G_i - G_{i-1}, \ i=1,\dots, \ell \}$ (with $G_i$ as the above chain of proper subgroups) generates $G$. Moreover, it is clear that $\ell(K) \leq \ell(G)$ for every subgroup $K \leq G$, so the result follows by letting $K=N_{G}(H)$. 
\end{proof}

\begin{theorem}
Let $G$ be a finite group of size $n$, $r:= r(G)$, and $A$ a finite set of size $q \geq 2$. Let $r_i$ be the number of conjugacy classes of subgroups of $G$ of index $i$. Let $p_1, \dots, p_s$ be the prime divisors of $\vert G \vert$ and let $r_P = \sum_{i=1}^s r_i$. Then:
\[ 
\Rank( \ICA( G; A )) \leq \begin{cases}
 (r-r_P -1) \ell(G) + 2r - r_2 -1 & \text{if } q=2, \\
(r - r_P - 1) \ell(G)  + 2r  & \text{if } q\geq 3.
\end{cases} \]
\end{theorem}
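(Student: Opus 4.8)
The plan is to combine the structure theorem (Theorem~\ref{th:ICA}) with the two auxiliary lemmas just established, mirroring exactly the bookkeeping used in the proof of Theorem~\ref{cor:bound}. By Theorem~\ref{th:ICA}, $\ICA(G;A)$ is a direct product indexed by the conjugacy classes $[H_1],\dots,[H_r]$ of subgroups, with factors $(N_G(H_i)/H_i)\wr S_{\alpha_i}$. By part~2 of Lemma~\ref{le:basic}, $\Rank$ of a direct product is at most the sum of the ranks of the factors, so it suffices to bound each factor and add. The key difference from the Dedekind case is that here subgroups need not be normal and the quotients $N_G(H_i)/H_i$ are not simply cyclic, so I will invoke Lemma~\ref{le-aux1} for the prime-index factors and Lemma~\ref{le-aux2} for all remaining factors.

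First I would fix the enumeration so that $H_r = G$ (the factor $S_q$, contributing $\Rank(S_q)\le 2$), and separate the indexing set into three groups: the $r_P$ classes of prime index, the one class of index $1$, and the remaining $r - r_P - 1$ classes of composite or non-prime index. For each of the $r - r_P - 1$ factors of non-prime index I apply Lemma~\ref{le-aux2} to get the bound $\ell(G) + 2$; for each of the $r_P$ prime-index factors I apply Lemma~\ref{le-aux1} to get $2$ (or $1$ in the special case). Summing, in the case $q\ge 3$, gives
\begin{align*}
\Rank(\ICA(G;A)) &\le (r - r_P - 1)(\ell(G)+2) + 2 r_P + 2 \\
&= (r - r_P - 1)\ell(G) + 2r,
\end{align*}
where the final factor $S_q$ with $q\ge 3$ contributes $2$ and is absorbed into the count, matching the claimed bound exactly as in Theorem~\ref{cor:bound}.

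For the case $q = 2$, I would refine the count of prime-index factors using Lemma~\ref{alpha}: when $q=2$ and the index is $2$, part~2 of Lemma~\ref{alpha} forces $\alpha_{[H_i]} = 1$, so the corresponding factor is $\mathbb{Z}_2 \wr S_1 \cong \mathbb{Z}_2$ with rank $1$ rather than $2$ (this is precisely the $p=2,q=2$ branch of Lemma~\ref{le-aux1}). There are $r_2$ such classes, and moreover the top factor $S_{\alpha_r}$ with $\alpha_r = q = 2$ becomes $S_2$ of rank $1$. Each such refinement saves exactly one generator relative to the $q\ge 3$ count, yielding the reduction $2r \mapsto 2r - r_2 - 1$, again in parallel with Theorem~\ref{cor:bound}.

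I expect no genuine obstacle here, since the hard analytic content is already packaged in Lemmas~\ref{le-aux1} and~\ref{le-aux2}; the proof is essentially an arithmetic accounting over the direct-product decomposition. The only point requiring mild care is the correct partition of the index set and ensuring that the index-$1$ factor and the index-$2$ factors are not double-counted against the prime-index tally $r_P$ (note $2 \in \{p_1,\dots,p_s\}$ whenever $|G|$ is even, so $r_2$ is already included in $r_P$). Getting these off-by-one adjustments right in the $q=2$ case is the most error-prone step, but it is routine once the Dedekind-group proof is used as a template.
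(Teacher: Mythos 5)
Your proposal is correct and follows essentially the same route as the paper's own proof: decompose via Theorem~\ref{th:ICA}, bound the prime-index factors by Lemma~\ref{le-aux1}, the remaining factors by Lemma~\ref{le-aux2}, and refine the $q=2$ case using $\alpha_{[H]}=1$ for index-$2$ subgroups and $S_{\alpha_r}\cong S_2$. The arithmetic accounting, including the $2r_P + (r-r_P-1)(\ell(G)+2)+2$ tally and the $q=2$ reduction to $2r-r_2-1$, matches the paper exactly.
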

\begin{proof}
Let $H_1, H_2, \dots, H_r$ be the list of different subgroups of $G$ with $H_r = G$. By Theorem \ref{th:ICA} and Lemmas \ref{le:basic}, \ref{le-aux1}, \ref{le-aux2},
\begin{align*}
\Rank( \ICA(G; A )) & \leq \sum_{i=1}^{r-1} \Rank\left( (N_{G}(H_i)/H_i) \wr S_{\alpha_i} \right) + \Rank(S_q) \\
& \leq \sum_{[G : H_i]=p_k} 2 +  \sum_{1 \neq [G : H_i] \neq p_k} (\ell(G) + 2) + 2 \\
& = 2 r_P+ (r - r_P - 1) (\ell(G) + 2) + 2  \\
& =  (r - r_P - 1) \ell(G)  + 2r .
\end{align*}
When $q=2$, we may improve this bound as follows:
\begin{align*}
\Rank( \ICA(G; A )) & \leq   \sum_{[G : H_i]=2} 1  +  \sum_{[G : H_i]=p_k \neq 2} 2   +  \sum_{1<[G : H_i] \neq p_k} (\ell(G) + 2) + 1 \\
& = r_2  + 2(r_P - r_2) + (r - r_P - 1) (\ell(G)+2) + 1  \\
& = (r-r_P -1) \ell(G) + 2r - r_2 - 1.
\end{align*}

\end{proof}

If $G$ is a subgroup of $S_n$, we may find a good upper bound for $\Rank( \ICA( G; A ))$ in terms of $n$ by using a theorem of McIver and Neumann.

\begin{proposition}
Suppose that $G \leq S_n$, for some $n >3$. Let $r := r(G)$. Then 
\[  \Rank( \ICA( G; A )) \leq \begin{cases}
 (r -1) \left\lfloor \frac{n}{2} \right\rfloor + 2r - r_2 -1 & \text{if } q=2, \\
(r  - 1) \left\lfloor \frac{n}{2} \right\rfloor  + 2r  & \text{if } q \geq 3.
\end{cases} \]
\end{proposition}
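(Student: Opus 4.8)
The plan is to transcribe the proof of the preceding theorem, replacing the length bound $\ell(G)$ by the bound coming from the \emph{McIver--Neumann theorem}, which asserts that for $n > 3$ every subgroup of $S_n$ is generated by at most $\lfloor n/2 \rfloor$ elements. The first step is to record the analogue of Lemma \ref{le-aux2}: since $G \leq S_n$, each normaliser $N_G(H_i)$ is again a subgroup of $S_n$, so McIver--Neumann gives $\Rank(N_G(H_i)) \leq \lfloor n/2 \rfloor$; by part 1 of Lemma \ref{le:basic} the quotient satisfies $\Rank(N_G(H_i)/H_i) \leq \lfloor n/2 \rfloor$, and then part 3 of Lemma \ref{le:basic} yields $\Rank\left( (N_G(H_i)/H_i) \wr S_{\alpha_i} \right) \leq \lfloor n/2 \rfloor + \Rank(S_{\alpha_i}) \leq \lfloor n/2 \rfloor + 2$ for every $i$.

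Next I would feed this into the decomposition of Theorem \ref{th:ICA}, bounding $\Rank(\ICA(G;A))$ by the sum of the ranks of its $r$ wreath-product factors. In order to keep the final estimate expressed purely in terms of $n$, $r$ and $r_2$, I would deliberately forgo the prime-index refinement of Lemma \ref{le-aux1} (which would introduce the quantity $r_P$) and instead bound all $r-1$ proper factors uniformly by $\lfloor n/2 \rfloor + 2$. Together with the top factor indexed by $H_r = G$, which is $S_{\alpha_r} = S_q$ of rank $2$ by part 1 of Lemma \ref{alpha}, this gives $(r-1)(\lfloor n/2 \rfloor + 2) + 2 = (r-1)\lfloor n/2 \rfloor + 2r$, settling the case $q \geq 3$.

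For $q = 2$ I would extract exactly two savings, just as in the $q=2$ halves of Theorem \ref{cor:bound} and of the preceding theorem. By part 2 of Lemma \ref{alpha}, a class $[H_i]$ has $\alpha_i = 1$ precisely when $[G:H_i] = 2$; for such $H_i$ the factor collapses to $(N_G(H_i)/H_i) \wr S_1 \cong \mathbb{Z}_2$ (an index-two subgroup being normal), of rank $1$, so each of the $r_2$ index-two classes contributes at most $1$ instead of $\lfloor n/2 \rfloor + 2$. At the same time the top factor becomes $S_2$, of rank $1$ rather than $2$. Carrying both reductions through the sum gives a bound of $(r-1)\lfloor n/2 \rfloor + 2r - r_2 - 1$; in fact the index-two factors save even more than this, so the stated inequality holds with room to spare.

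I do not expect a substantive obstacle, since the argument is a direct substitution $\ell(G) \mapsto \lfloor n/2 \rfloor$ into the preceding computation; the genuine content is merely the invocation of McIver--Neumann. The two points needing care are, first, that McIver--Neumann is applied to the normalisers $N_G(H_i)$ and not merely to $G$ --- this is legitimate because a subgroup of a subgroup of $S_n$ is still a subgroup of $S_n$ --- and second, that the hypothesis $n > 3$ is exactly what the $\lfloor n/2 \rfloor$ bound requires, as it already fails for $S_3$. The remaining work is the routine bookkeeping of the $q = 2$ case, where one must note that the $r_2$ index-two classes are distinct from the top class $[G]$.
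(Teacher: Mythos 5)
Your proof is correct and follows essentially the same route as the paper, whose own proof is just the one-line observation that McIver--Neumann gives $\Rank(K)\leq\lfloor n/2\rfloor$ for every $K\leq S_n$ with $n>3$, after which the computation of the preceding theorem is repeated with $\ell(G)$ replaced by $\lfloor n/2\rfloor$. Your reading that the stated bound forgoes the prime-index refinement (hence no $r_P$ term) and your bookkeeping for the $q=2$ case both match what the stated inequality requires.
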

\begin{proof}
By \cite{MN87}, for every $n > 3$ and every $K \leq S_n$, $\Rank(K) \leq \lfloor \frac{n}{2} \rfloor$. The rest of the proof is analogous to the previous one.
\end{proof}

\begin{example}
Consider the symmetric group $S_4$. In this case it is known that $r=r(S_4) = 11$ and $r_2 = 1$ (as $A_4$ is its only subgroup of index $2$). Therefore,
\[  \Rank( \ICA( S_4; A )) \leq \begin{cases}
 (11 -1) \frac{4}{2} + 2 \cdot 11  - 1 -1 = 40  & \text{if } q=2, \\
(11  - 1) \frac{4}{2} + 2 \cdot 11 = 42 & \text{if } q \geq 3.
\end{cases} \]
For sake of comparison, the group $\ICA( S_4; \{0,1 \} )$ has order $2^{2^{24}}$.
\end{example}


\section{Lower bounds on ranks} \label{lower}

\subsection{Finite groups}

\begin{proposition}\label{lower-bound}
Let $G$ be a finite group and $A$ a finite set of size $q \geq 2$. Then
\[ \Rank(\ICA(G;A) \geq \begin{cases} 
r(G) -  r_2(G)  & \text{if } q=2, \\
r(G)  & \text{otherwise}. 
\end{cases} . \]
\end{proposition}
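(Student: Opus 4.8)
The plan is to read off the direct-product decomposition of $\ICA(G;A)$ furnished by Theorem~\ref{th:ICA} and then project onto a large elementary abelian $2$-group, whose rank is transparent. The only general fact about rank that I will invoke is its monotonicity under surjections: if $\phi\colon M \to Q$ is a surjective homomorphism, then $\Rank(Q) \le \Rank(M)$, since the $\phi$-images of any generating set of $M$ generate $Q$. Hence it suffices to exhibit a surjection from $\ICA(G;A)$ onto $\mathbb{Z}_2^{\,k}$ with $k$ equal to the claimed bound, because $\Rank(\mathbb{Z}_2^{\,k}) = k$.

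Write $\ICA(G;A) \cong \prod_{i=1}^{r} M_i$ with $M_i = (N_G(H_i)/H_i) \wr S_{\alpha_i}$ and $\alpha_i = \alpha_{[H_i]}(G;A)$, as in Theorem~\ref{th:ICA}. The key observation is that whenever $\alpha_i \ge 2$, the factor $M_i$ admits a canonical surjection onto $\mathbb{Z}_2$: first project the wreath product $C \wr S_{\alpha_i}$ onto its top group $S_{\alpha_i}$ (the quotient by the base group $C^{\alpha_i}$, which is a homomorphism by the semidirect-product structure), and then compose with the sign homomorphism $S_{\alpha_i} \to \mathbb{Z}_2$, which is onto for $\alpha_i \ge 2$. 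Taking these maps coordinatewise over all indices $i$ with $\alpha_i \ge 2$ produces a surjection of $\prod_i M_i$ onto $\mathbb{Z}_2^{\,k}$, where $k$ is the number of such indices; surjectivity is immediate since each coordinate may be prescribed independently in a direct product.

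It then remains to count these indices by means of Lemma~\ref{alpha}. If $q \ge 3$, part~3 of that lemma gives $\alpha_i \ge 3 \ge 2$ for every $i$, so $k = r$ and $\Rank(\ICA(G;A)) \ge r = r(G)$. If $q = 2$, part~2 shows that $\alpha_i = 1$ occurs precisely when $[G:H_i] = 2$, which happens for exactly $r_2(G)$ of the conjugacy classes; for each of the remaining $r - r_2(G)$ classes one has $\alpha_i \ge 2$, so $k = r - r_2(G)$ and $\Rank(\ICA(G;A)) \ge r(G) - r_2(G)$, as required.

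The one genuine subtlety, and where a careless argument breaks down, is the insistence on sending every relevant factor to the \emph{same} prime. Surjecting $\prod_i M_i$ onto a product of arbitrary nontrivial abelian groups does not force the rank up, since, for example, $\mathbb{Z}_2 \times \mathbb{Z}_3 \cong \mathbb{Z}_6$ is cyclic; it is exactly the uniform target $\mathbb{Z}_2$, supplied by the sign map on each $S_{\alpha_i}$, that makes the contributions add. I expect this to be the conceptual crux rather than a computational one. The loss of the $r_2(G)$ term when $q=2$ is traced precisely to the index-$2$ classes, whose factors have $\alpha_i = 1$ and therefore carry no sign homomorphism onto $\mathbb{Z}_2$, so they fall outside the construction above.
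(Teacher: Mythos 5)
Your proof is correct and is essentially the paper's own argument in different clothing: the paper quotients each factor $(N_G(H_i)/H_i)\wr S_{\alpha_i}$ by the normal subgroup $(N_G(H_i)/H_i)\wr A_{\alpha_i}$, which is precisely the kernel of your ``project to $S_{\alpha_i}$, then apply the sign map'' surjection, and both arguments then conclude via the rank of $\mathbb{Z}_2^{\,k}$ with the same count of indices from Lemma~\ref{alpha}.
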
 
\begin{proof}
Let $[H_1], [H_2], \dots, [H_r]$ be the conjugacy clases of subgroups of $G$, with $r=r(G)$. As long as $\alpha_i > 1$, the factor $(N_{G}(H_i)/H_i) \wr S_{\alpha_i}$, in the decomposition of $\ICA(G;A)$, has a proper normal subgroup $ (N_{G}(H_i)/H_i) \wr A_{\alpha_i}$ (where $A_{\alpha_i}$ is the alternating group of degree $\alpha_i$). We know that $\alpha_i =1$ if and only if $[G:H]=2$ and $q=2$ (Lemma \ref{alpha}). Hence, for $q \geq 3$, we have
\[  \Rank(\ICA(G;A)) \geq \Rank\left( \frac{\prod_{i=1}^{r} \left( (N_{G}(H_i)/H_i) \wr S_{\alpha_i} \right)}{ \prod_{i=1}^{r} \left( (N_{G}(H_i)/H_i) \wr A_{\alpha_i} \right) } \right) 
= \Rank\left(\prod_{i=1}^{r} \mathbb{Z}_2\right) = r.  \]    

Assume now that $q=2$, and let $[H_1], \dots, [H_{r_2}]$ be the conjugacy classes of subgroups of index two, with $r_2 = r_2(G)$. Now, $\Rank(\ICA(G;A))$ is at least
\[ \Rank\left(\frac{\prod_{i=1}^{r} \left( (N_{G}(H_i)/H_i) \wr S_{\alpha_i} \right)}{ \prod_{i=1}^{r} \left( (N_{G}(H_i)/H_i) \wr A_{\alpha_i} \right) } \right) =  \Rank\left(\prod_{i=r_2+1}^r \mathbb{Z}_2 \right) = r - r_2,  \]
and the result follows. 
\end{proof}

The previous result could be refined for special classes of finite groups. In \cite{CRG16a} this has been done for cyclic groups, and we do it next for dihedral groups. 

\begin{proposition}
Let $n \geq 1$ and $A$ a finite set of size $q \geq 2$. 
\[  \Rank(\ICA(D_{2n};A)) \geq
\begin{cases}
  \dd_-(2n) + 2\dd_+(2n)& \text{if $n$ is odd and $q \geq 3$,} \\
  \dd_-(2n) + 2\dd_+(2n) -1 &  \text{if $n$ is odd and $q = 2$,} \\
     \dd_-(2n) + 2\dd_+(2n) + 4 \dd_+(n)  & \text{if $n$ is even and $q \geq 3$,}\\
     \dd_-(2n) + 2\dd_+(2n) + 2 \dd_+(n) - 1  & \text{if $n$ is even and $q = 2$,}
\end{cases} \]
\end{proposition}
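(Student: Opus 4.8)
The plan is to sharpen the abelianization argument of Proposition \ref{lower-bound} by exploiting the dihedral structure of the factors of $\ICA(D_{2n};A)$. Writing $G=D_{2n}$ and combining Theorem \ref{th:ICA} with Lemmas \ref{le-di1} and \ref{le-di2}, I would decompose
\[ \ICA(D_{2n};A) \cong \prod_{i=1}^{r}\left((N_{G}(H_i)/H_i)\wr S_{\alpha_i}\right), \]
where, as the index $m_i=[G:H_i]$ ranges over the conjugacy classes, the quotient $N_{G}(H_i)/H_i$ is trivial when $m_i$ is odd, is a dihedral group $D_{m_i}$ when $H_i$ is normal of even index (with $D_2\cong\mathbb{Z}_2$ when $m_i=2$), and is $\mathbb{Z}_2$ for the two \emph{reflection-type} classes $[\langle\rho^{m_i},s\rangle]$, $[\langle\rho^{m_i},\rho s\rangle]$ occurring when $n$ is even and $m_i\mid n$. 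The strategy is then to produce a surjection of $\ICA(D_{2n};A)$ onto an elementary abelian $2$-group $(\mathbb{Z}_2)^N$ and to conclude $\Rank(\ICA(D_{2n};A))\geq N$.

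On each factor I would use two families of $\mathbb{Z}_2$-valued homomorphisms. The first is the sign $\mathrm{sgn}\colon S_{\alpha_i}\to\mathbb{Z}_2$, pulled back along the projection $C\wr S_{\alpha_i}\to S_{\alpha_i}$, which is nontrivial precisely when $\alpha_i\geq2$. The second, available whenever $C:=N_{G}(H_i)/H_i$ is nontrivial, is a \emph{reflection-parity} map: let $\pi\colon C\to\mathbb{Z}_2$ send rotations to $0$ and reflections to $1$ (the identity when $C\cong\mathbb{Z}_2$), and put $\Pi(v;\phi)=\sum_{i}\pi(v_i)$. I would check $\Pi$ is a homomorphism $C\wr S_{\alpha_i}\to\mathbb{Z}_2$: from $(v;\phi)(w;\psi)=(v\cdot w^{\phi};\phi\psi)$ one gets $\Pi(v\cdot w^{\phi};\phi\psi)=\sum_i\pi(v_i)+\sum_i\pi(w_{\phi(i)})$, and $\sum_i\pi(w_{\phi(i)})=\sum_i\pi(w_i)$ because $\phi$ merely permutes the summands. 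Taking the product of all these maps over all factors yields a surjection onto $(\mathbb{Z}_2)^N$ with $N$ the number of independent maps, and since $\Rank((\mathbb{Z}_2)^N)=N$ the bound follows.

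It then remains to count $N$ in each regime. Using the enumeration of conjugacy classes of subgroups of $D_{2n}$ by index from \cite{Conrad}: the $\dd_-(2n)$ classes of odd index each contribute one sign map; the $\dd_+(2n)$ even-index rotation-type classes each contribute a reflection-parity map, and a sign map as well whenever $\alpha_i\geq2$; and when $n$ is even the further $2\dd_+(n)$ reflection-type classes each contribute a reflection-parity map, together with a sign map when $\alpha_i\geq2$. By Lemma \ref{alpha}, $\alpha_i=1$ happens exactly for $q=2$ at the index-$2$ classes, so for $q\geq3$ every sign map is nontrivial, while for $q=2$ precisely the index-$2$ classes lose their sign. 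Summing case by case produces the four stated inequalities; in the case $q=2$ with $n$ even one may also retain the sign maps of the reflection-type classes of index greater than $2$, obtaining a bound slightly larger than stated, which still implies the claimed one.

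The main obstacle is the combinatorial bookkeeping of the previous paragraph: one must partition the conjugacy classes of subgroups of $D_{2n}$ simultaneously by the parity of the index, by whether the index divides $n$, and by rotation/reflection type, and then invoke Lemma \ref{alpha} to suppress exactly the sign maps at the index-$2$ classes when $q=2$; it is here that the constant $-1$ and the coefficients of $\dd_+(n)$ arise. A subsidiary point requiring care is the verification that the reflection-parity maps genuinely descend to the wreath products and that the product of all the constructed homomorphisms is onto the full $(\mathbb{Z}_2)^N$, so that $N$ is a legitimate lower bound for the rank.
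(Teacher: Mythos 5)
Your proposal is correct and is essentially the paper's own argument: both produce a surjection of $\ICA(D_{2n};A)$ onto an elementary abelian $2$-group and count its rank, your explicit sign and reflection-parity homomorphisms being exactly the two $\mathbb{Z}_2$ quotients the paper extracts from each factor $D_{m_i}\wr S_{\alpha_i}$ via a chain of normal subgroups. Your closing observation that for $q=2$ and $n$ even the reflection-type classes of index greater than $2$ still retain their sign maps (so the true bound is slightly better than stated) is also correct, and consistent with the paper, whose stated bound is simply the weaker one.
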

\begin{proof}
We shall use the decomposition of $\ICA(D_{2n};A)$ given in the proof of Theorem \ref{dihedral}. For each $m_i \mid 2n$ even greater than $2$, the corresponding $\alpha_i$ is greater than $1$ by Lemma \ref{alpha}. The group $D_{m_i} \wr S_{\alpha_i}$ has a normal subgroup $N \cong (\mathbb{Z}_{m_i /2})^{\alpha_i}$ such that $(D_{m_i} \wr S_{\alpha_i}) / N \cong \mathbb{Z}_2 \wr S_{\alpha_i}$. Now, $\mathbb{Z}_2 \wr S_{\alpha_i}$ has a normal subgroup   
\[ U = \left\{ ((a_1, \dots, a_{\alpha_i}); \id) : \sum_{j=1}^{\alpha_i} a_j = 0 \mod(2) \right\} \]
such that $(\mathbb{Z}_{2} \wr S_{\alpha_i})/U \cong \mathbb{Z}_2 \times S_{\alpha_i}$. Finally, a copy of the alternating group $A_{\alpha_i}$ is a normal subgroup of $\mathbb{Z}_2 \times S_{\alpha_i}$ with quotient group $\mathbb{Z}_2 \times \mathbb{Z}_2$. This implies that $D_{m_i} \wr S_{\alpha_i}$ has a normal subgroup with quotient group isomorphic to $\mathbb{Z}_2 \times \mathbb{Z}_2$. 

Suppose that $n$ is odd and $q \geq 3$. Then $\ICA(D_{2n};A)$ has a normal subgroup with quotient group isomorphic to
\[ \prod_{m_i \mid 2n \text{ odd }} \mathbb{Z}_2 \times \prod_{2 < m_i \mid 2n \text{ even}} (\mathbb{Z}_2)^2 \times (\mathbb{Z}_2)^2. \]
Thus, $\dd_-(2n) + 2\dd_+(2n) \leq \Rank(\ICA(D_{2n};A))$. If $q = 2$, the last factor above becomes just $\mathbb{Z}_2$, as $\alpha_i = 1$ here, and the result follows.  

Suppose that $n$ is even and $q \geq 3$. Then $\ICA(D_{2n};A)$ has a normal subgroup with quotient group isomorphic to
\[ \prod_{m_i \mid 2n \text{ odd}} \mathbb{Z}_2 \times \prod_{2 < m_i \mid 2n \text{ even}} (\mathbb{Z}_2)^2 \times (\mathbb{Z}_2)^2  \times \prod_{m_i \mid n \text{ even}} (\mathbb{Z}_2)^4 \]
Therefore, $\dd_-(2n) + 2\dd_+(2n) + 4 \dd_+(n) \leq  \Rank(\ICA(D_{2n};A))$. If $q=2$, the last $d_+(n) + 1$ factors become $\mathbb{Z}_2 \times \prod_{m_i \mid n \text{ even}} (\mathbb{Z}_2)^2 $ and the result follows. 
\end{proof}


\subsection{Infinite groups}

Now we turn our attention to the case when $G$ is an infinite group. It was shown in \cite{BLR88} that $\ICA(\mathbb{Z};A)$ (and so $\CA(\mathbb{Z};A)$) is not finitely generated by studying its action on periodic configurations. In this section, using elementary techniques, we prove that the monoid $\CA(G;A)$ is not finitely generated when $G$ is infinite abelian, free or infinite dihedral; this illustrates an application of the study of ranks of groups of cellular automata over finite groups. 

\begin{remark}\label{remark}
Let $G$ be a group that is not finitely generated. Suppose that $\CA(G;A)$ has a finite generating set $H=\{ \tau_1, \dots, \tau_k \}$. Let $S_i$ be a memory set for each $\tau_i$. Then $G \neq \langle \cup_{i=1}^k S_i \rangle$, so let $\tau \in \CA(G;A)$ be such that its minimal memory set is not contained in $\langle \cup_{i=1}^k S_i \rangle$. As memory set for the composition $\tau_i \circ \tau_j$ is $S_i S_j = \{ s_i s_j : s_i \in S_i, s_j \in S_j \}$, $\tau$ cannot be in the monoid generated by $H$, contradicting that $H$ is a generating set for $\CA(G;A)$. This shows that $\CA(G;A)$ is not finitely generated whenever $G$ is not finitely generated.  
\end{remark}

The next result, which holds for an arbitrary group $G$, will be our main tool.

\begin{lemma}\label{th:quotient}
Let $G$ be a group and $A$ a set. For every normal subgroup $N$ of $G$, 
\[ \Rank(\CA(G/N;A))  \leq \Rank(\CA(G;A))  . \] 
\end{lemma}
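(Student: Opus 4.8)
The plan is to exhibit a surjective monoid homomorphism $\Phi : \CA(G;A) \to \CA(G/N;A)$ and then invoke the elementary fact that the image of a generating set under a surjective monoid homomorphism is again a generating set of no larger cardinality. Applying this to a minimal generating set of $\CA(G;A)$ gives a generating set of $\CA(G/N;A)$ of cardinality at most $\Rank(\CA(G;A))$, which is exactly the desired inequality (and the inequality is trivially true if the right-hand side is infinite).

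To construct $\Phi$, I would first identify $A^{G/N}$ with a distinguished $\tau$-invariant subspace of $A^G$. Let $\pi : G \to G/N$ be the quotient map and let $\Fix_N := \{ x \in A^G : n \cdot x = x \text{ for all } n \in N \}$ be the set of configurations fixed by the shift action of $N$. Since $n \cdot x(h) = x(n^{-1}h)$ and $N$ is normal, $x \in \Fix_N$ exactly when $x$ is constant on the cosets of $N$; hence $x \mapsto \hat x$, where $\hat x(gN) := x(g)$, is a bijection $\Fix_N \to A^{G/N}$, and a direct check shows it carries the shift action of $g_0$ on $\Fix_N$ to the shift action of $g_0 N$ on $A^{G/N}$. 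Because every $\tau \in \CA(G;A)$ commutes with the shift, it preserves $\Fix_N$ (if $n \cdot x = x$ then $n \cdot \tau(x) = \tau(n \cdot x) = \tau(x)$), so I can define $\Phi(\tau)$ purely set-theoretically as the transport of $\tau\vert_{\Fix_N}$ across this bijection; note that this makes $\Phi(\tau)$ well defined before any mention of memory sets.

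The substantive steps are then: (i) $\Phi(\tau) \in \CA(G/N;A)$; (ii) $\Phi$ is a monoid homomorphism; and (iii) $\Phi$ is surjective. For (i) I would avoid topology and argue with memory sets: if $S$ is a memory set and $\mu : A^S \to A$ a local rule for $\tau$, then a short computation gives $\Phi(\tau)(y)(gN) = \mu\big( s \mapsto y((gN)(sN)) \big)$, so $\Phi(\tau)(y)(gN)$ depends only on the restriction of $y$ to $(gN)\pi(S)$; thus $\Phi(\tau)$ is a cellular automaton with memory set $\pi(S)$ and local rule $\bar\mu(\phi) := \mu(\phi \circ \pi\vert_S)$. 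Part (ii) is formal, since restriction to the invariant set $\Fix_N$ is functorial, the identifying bijection is fixed, and $\Phi(\id) = \id$. For (iii) I would lift: given $\sigma \in \CA(G/N;A)$ with memory set $\bar S$ and local rule $\bar\mu$, choose a finite $S \subseteq G$ on which $\pi$ restricts to a bijection onto $\bar S$, set $\mu(\psi) := \bar\mu(\psi \circ (\pi\vert_S)^{-1})$ for $\psi \in A^S$, and let $\tau$ be the automaton with memory set $S$ and local rule $\mu$; the formula from (i) then gives that $\Phi(\tau)$ has memory set $\bar S$ and local rule $\phi \mapsto \bar\mu(\phi \circ \pi\vert_S \circ (\pi\vert_S)^{-1}) = \bar\mu(\phi)$, so $\Phi(\tau) = \sigma$.

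The main obstacle is the bookkeeping around $\pi$ in (i) and (iii): because $\pi\vert_S$ need not be injective, one must check carefully that the upstairs local rule genuinely factors through $\pi(S)$, so that $\Phi(\tau)$ is well defined as a rule on $G/N$, and conversely that choosing a section of $\pi$ over a finite memory set suffices to realise an arbitrary target rule downstairs. Once this correspondence between local rules on $G$ and on $G/N$ is pinned down, surjectivity is immediate and the rank inequality follows.
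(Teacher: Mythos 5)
Your argument is correct and takes essentially the same route as the paper: the paper's proof simply cites \cite[Proposition 1.6.2]{CSC10} for the existence of a monoid epimorphism $\Phi : \CA(G;A) \to \CA(G/N;A)$ and concludes by taking the image of a minimal generating set. What you have done is open that black box and construct $\Phi$ explicitly via the $N$-fixed configurations, which is precisely the content of the cited proposition, so no gap remains.
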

\begin{proof}
By \cite[Proposition 1.6.2]{CSC10}, there is a monoid epimorphism $\Phi : \CA(G;A) \to \CA(G/N;A)$. Hence, the image under $\Phi$ of a generating set for $\CA(G;A)$ of minimal size is a generating set for $\CA(G/N;A)$ (not necesarily of minimal size).  
\end{proof}

\begin{theorem}
Let $G$ be an infinite abelian group and $A$ a finite set of size $q \geq 2$. Then, the monoid $\CA(G;A)$ is not finitely generated.
\end{theorem}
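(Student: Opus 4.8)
The plan is to split into two cases according to whether $G$ is finitely generated. If $G$ is not finitely generated, then the conclusion is immediate from Remark \ref{remark}, which already establishes that $\CA(G;A)$ fails to be finitely generated for any group that is not finitely generated. So the substance lies in the finitely generated case, which I would handle by passing to finite cyclic quotients and invoking the lower bound of Section \ref{lower}.

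Suppose then that $G$ is an infinite, finitely generated abelian group. By the structure theorem for finitely generated abelian groups, $G \cong \mathbb{Z}^d \times F$ with $d \geq 1$ (since $G$ is infinite) and $F$ finite. In particular $G$ surjects onto $\mathbb{Z}$, hence onto the finite cyclic group $\mathbb{Z}_{n}$ for every $n \geq 1$: for each $n$ there is a normal subgroup $N_n$ of $G$ with $G/N_n \cong \mathbb{Z}_n$. The idea is to let $n$ grow and use the finite-group lower bound to force the rank of $\CA(G;A)$ to be infinite.

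Concretely, I would combine three facts. First, Lemma \ref{th:quotient} gives $\Rank(\CA(\mathbb{Z}_n;A)) \leq \Rank(\CA(G;A))$ for every $n$. Second, since $\ICA(\mathbb{Z}_n;A)$ is the group of units of $\CA(\mathbb{Z}_n;A)$, identity (\ref{rank-formula}) yields $\Rank(\CA(\mathbb{Z}_n;A)) \geq \Rank(\ICA(\mathbb{Z}_n;A))$. Third, Proposition \ref{lower-bound} bounds the latter below by $r(\mathbb{Z}_n) - r_2(\mathbb{Z}_n)$. Since $\mathbb{Z}_n$ is cyclic it has exactly one subgroup for each divisor of $n$, all normal, so $r(\mathbb{Z}_n) = \dd(n)$ while $r_2(\mathbb{Z}_n) \leq 1$. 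Taking $n = 2^k$ gives $r(\mathbb{Z}_{2^k}) = k+1$ and $r_2(\mathbb{Z}_{2^k}) = 1$, so
\[ \Rank(\CA(G;A)) \geq \Rank(\ICA(\mathbb{Z}_{2^k};A)) \geq k \]
for every $k \geq 1$. Letting $k \to \infty$ shows $\Rank(\CA(G;A))$ cannot be finite, whence $\CA(G;A)$ is not finitely generated.

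I do not expect a genuine obstacle here: the argument is a clean assembly of results already in hand. The only point requiring real care is the reduction to finite cyclic quotients, since an infinite abelian group need not surject onto arbitrarily large finite groups (for instance, the Pr\"ufer group has no nontrivial finite quotient). This is precisely why splitting on finite generation is needed, with Remark \ref{remark} disposing of the non-finitely-generated case. After that split, the chain Lemma \ref{th:quotient}, then (\ref{rank-formula}), then Proposition \ref{lower-bound}, together with the unboundedness of $\dd(2^k)$, finishes the proof.
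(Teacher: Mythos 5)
Your proposal is correct and follows essentially the same route as the paper: split on finite generation, use the structure theorem to obtain quotients $\mathbb{Z}_{2^k}$, and chain Lemma \ref{th:quotient}, identity (\ref{rank-formula}), and Proposition \ref{lower-bound} to get $\Rank(\CA(G;A)) \geq k$ for all $k$. Your explicit justification of $\Rank(\CA(\mathbb{Z}_n;A)) \geq \Rank(\ICA(\mathbb{Z}_n;A))$ via (\ref{rank-formula}) is a small point the paper leaves implicit, but otherwise the arguments coincide.
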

\begin{proof}
If $G$ is not finitely generated, then Remark \ref{remark} shows that $\CA(G;A)$ is not finitely generated, so assume that $G$ is finitely generated. By the Fundamental Theorem of Finitely Generated Abelian Groups, $G$ is isomorphic to
\[ \mathbb{Z}^s \oplus \mathbb{Z}_{p_1} \oplus \mathbb{Z}_{p_2} \oplus \dots \oplus \mathbb{Z}_{p_t},   \]
where $s \geq 1$ (because $G$ is infinite), and $p_1, \dots, p_t$ are powers of primes. Then, for every $k \geq 1$, we may find a subgroup 
\[ N  \cong \langle 2^k \rangle \oplus \mathbb{Z}^{s-1} \oplus \mathbb{Z}_{p_1} \oplus \mathbb{Z}_{p_2} \oplus \dots \oplus \mathbb{Z}_{p_t} \]
such that $G/N \cong \mathbb{Z}_{2^k}$. By Lemma \ref{th:quotient} and Proposition \ref{lower-bound},
\[ \Rank(\CA(G;A)) \geq \Rank(\CA(\mathbb{Z}_{2^k};A))  \geq \Rank(\ICA(\mathbb{Z}_{2^k};A)) \geq r(\mathbb{Z}_{2^k}) - 1 = k. \]
As the above holds for every $k \geq 1$, then $\CA(G;A)$ is not finitely generated.  
\end{proof}

The \emph{abelianization} of any group $G$ is the quotient $G/[G,G]$, where $[G,G]$ is its \emph{commutator subgroup}, i.e. the normal subgroup of $G$ generated by all commutators $[g,h]:=ghg^{-1}h^{-1}$, $g,h \in G$. The abelianization of $G$ is in fact the largest abelian quotient of $G$. 

\begin{corollary}
Let $G$ be a group with an infinite abelianization and $A$ a finite set of size $q \geq 2$. Then, the monoid $\CA(G;A)$ is not finitely generated.
\end{corollary}
\begin{proof}
Let $G^\prime = G/[G,G]$ be the abelianization of $G$. By Lemma \ref{th:quotient}, we have $\Rank(\CA(G^\prime;A))  \leq \Rank(\CA(G;A))$. But $\CA(G^\prime;A)$ is not finitely generated by the previous theorem, so the result follows.  
\end{proof}

\begin{corollary}
Let $F_S$ be a free group on a set $S$ and $A$ a finite set of size $q \geq 2$. Then, the monoid $\CA(F_S;A)$ is not finitely generated.
\end{corollary}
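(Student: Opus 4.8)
The plan is to derive the statement as an immediate consequence of the corollary on groups with infinite abelianization, for which the only real work is to identify the abelianization of a free group. First I would recall that the abelianization $F_S/[F_S,F_S]$ of the free group on $S$ is the free abelian group on $S$, i.e. the direct sum $\bigoplus_{s \in S}\mathbb{Z}$. Provided $S$ is nonempty, this group is infinite, so the corollary on groups with infinite abelianization applies directly and yields that $\CA(F_S;A)$ is not finitely generated.

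Alternatively, and perhaps more transparently, I would exhibit $\mathbb{Z}$ as a quotient of $F_S$: fixing any $s_0 \in S$, the map sending $s_0 \mapsto 1$ and every other free generator to $0$ extends, by the universal property of the free group, to a surjective homomorphism $F_S \to \mathbb{Z}$. Writing $N$ for its kernel, Lemma \ref{th:quotient} gives $\Rank(\CA(\mathbb{Z};A)) \leq \Rank(\CA(F_S;A))$, and since $\mathbb{Z}$ is infinite abelian, the theorem on infinite abelian groups shows that $\CA(\mathbb{Z};A)$ is not finitely generated; hence neither is $\CA(F_S;A)$. Both routes rely on the same underlying mechanism already used in the infinite abelian case, namely pushing an infinite abelian quotient down through the rank-monotonicity of Lemma \ref{th:quotient}.

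The only point requiring a moment's care — and the closest thing to an obstacle here — is the degenerate case $S = \emptyset$, where $F_S$ is trivial and $\CA(F_S;A)$ is just the finite full transformation monoid on $A$, which is finitely generated. Thus the statement is understood for nonempty $S$, and I would state this restriction explicitly. When $S$ is infinite one could alternatively invoke Remark \ref{remark} directly, since $F_S$ is then not finitely generated; but the abelianization argument already covers every nonempty $S$ uniformly, so no case split is necessary.
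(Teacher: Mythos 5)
Your proposal is correct and follows essentially the same route as the paper: identify the abelianization of $F_S$ as the free abelian group on $S$, note it is infinite, and invoke the corollary on groups with infinite abelianization. Your remark that the statement implicitly requires $S \neq \emptyset$ is a fair (if minor) observation that the paper leaves tacit.
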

\begin{proof}
As $F_S$ has an infinite abelianization, which is the free abelian group on $S$, the result follows by the previous corollary. 
\end{proof}

The infinite dihedral group $D_\infty = \langle x,y \; \vert \; x^2 = y^2 = \id \rangle$ has finite abelianization $\mathbb{Z}_2 \oplus \mathbb{Z}_2$. However, we can still show that $\CA(D_\infty, A)$ is not finitely generated.

\begin{proposition}
Let $A$ be a finite set of size $q \geq 2$. Then, $\CA(D_\infty ; A)$ is not finitely generated.
\end{proposition}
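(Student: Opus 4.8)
The plan is to exploit the fact that, although $D_\infty$ has finite abelianization (so the argument of the preceding corollary is unavailable), it nonetheless admits arbitrarily large finite dihedral quotients; I would then feed these into Lemma \ref{th:quotient} together with the lower bounds already established for $\ICA(D_{2k};A)$.

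First I would set up the relevant normal subgroups. Writing $D_\infty = \langle x, y \mid x^2 = y^2 = \id \rangle$, the element $r := xy$ has infinite order, and conjugation by $x$ (or by $y$) inverts $r$; hence for each integer $k \geq 1$ the cyclic subgroup $N_k := \langle r^k \rangle$ is normal in $D_\infty$. A direct check from the presentation then gives $D_\infty / N_k \cong D_{2k}$, the finite dihedral group of order $2k$ (with $r$ descending to a rotation of order $k$ and $x$ to a reflection). This identification is routine.

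Next, applying Lemma \ref{th:quotient} with $N = N_k$ yields $\Rank(\CA(D_{2k};A)) \leq \Rank(\CA(D_\infty;A))$ for every $k$. Since $\ICA(D_{2k};A)$ is the group of units of $\CA(D_{2k};A)$, identity \eqref{rank-formula} gives $\Rank(\ICA(D_{2k};A)) \leq \Rank(\CA(D_{2k};A))$, and therefore
\[ \Rank(\ICA(D_{2k};A)) \leq \Rank(\CA(D_\infty;A)) \quad \text{for all } k \geq 1. \]

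The final step is to observe that the left-hand side is unbounded in $k$. By Proposition \ref{lower-bound}, $\Rank(\ICA(D_{2k};A)) \geq r(D_{2k}) - r_2(D_{2k})$; restricting to odd $k$ we have $r(D_{2k}) = \dd(2k)$ and $r_2(D_{2k}) = 1$, so the bound reads $\dd(2k) - 1$. Taking $k$ to be a product of $j$ distinct odd primes gives $\dd(2k) = 2^{j+1}$, which is as large as desired. Thus $\Rank(\CA(D_\infty;A)) \geq \dd(2k) - 1$ for all such $k$, forcing $\Rank(\CA(D_\infty;A)) = \infty$, i.e.\ $\CA(D_\infty;A)$ has no finite generating set. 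The only genuine subtlety is the one already flagged: because the abelianization is finite, one cannot pass to cyclic quotients and must instead use the nonabelian dihedral quotients $D_{2k}$, with the required growth supplied directly by the divisor-counting bound of Proposition \ref{lower-bound}; the remaining steps are straightforward applications of Lemma \ref{th:quotient} and \eqref{rank-formula}.
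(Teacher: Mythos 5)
Your proof is correct and follows essentially the same route as the paper: the normal subgroups $\langle (xy)^k\rangle$ with quotient $D_{2k}$, Lemma \ref{th:quotient}, and the lower bound of Proposition \ref{lower-bound} on $\Rank(\ICA(D_{2k};A))$. The only difference is cosmetic — the paper takes $n=2^{k-1}$ while you take $k$ odd with many prime factors, and your choice has the small advantage that $r_2(D_{2k})=1$ is then genuinely correct, whereas the paper is slightly careless about that term (harmlessly, since the bound diverges either way).
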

\begin{proof}
For every $n \geq 1$, define $H_n := \langle (xy)^n \rangle \leq D_\infty$, which is a normal subgroup of $D_\infty$ with quotient group $D_\infty / H_n  \cong D_{2n}$. By Proposition \ref{lower-bound}, As $r_2(D_n) = 1$, for every $n \geq 1$,
\[ \Rank(\CA(D_\infty ; A)) \geq \Rank(\CA(D_{2n}; A)) \geq r(D_{2n}) - 1, \]
We know that $r(D_{2n}) \geq \dd(2n)$, so, taking $n=2^{k-1}$, $k \geq 1$, we see that $\Rank(\CA(D_\infty; A)) \geq d(2^k) -1 \geq k$, for any $k \geq 1$. 
\end{proof}

\begin{question}
Is there an infinite group $G$ such that $\CA(G;A)$ is finitely generated? 
\end{question} 

The techniques of this section seem ineffective to answer this for infinite groups with few proper quotients, such as the infinite symmetric group.


\section{Acknowledgments}

The second author thanks the National Council of Science and Technology (CONACYT) of the Government of Mexico for the National Scholarship (No. 423151) which allowed him to do part of the research reported in this article.    


\end{document}